\renewcommand{\Re}{\operatorname{Re}}
\newcommand{\Tr}{\mathrm{Tr}}
\newcommand{\dd}{\mathrm{d}}
\newcommand{\R}{\mathbb R}
\newcommand{\E}{\mathbf E}
\newcommand{\TT}{{\mathcal T}}
\newcommand{\inner}[3][]{(#2,#3 )_{#1}}
\newcommand{\HS}{\text{HS}}
\newcommand{\T}[1]{\tilde{#1}}
\newtheorem{theorem}{Theorem}[section]
\newtheorem{lemma}[theorem]{Lemma}
\newtheorem{proposition}{Proposition}[section]
\newtheorem{hypothesis}{Assumption}
\newtheorem{corollary}[theorem]{Corollary}
\theoremstyle{definition}
\theoremstyle{remark}
\newtheorem{remark}[theorem]{Remark}
\numberwithin{equation}{section}
\begin{document}
\title[Weak approximation for linear stochastic Volterra equations]{Weak convergence of a fully discrete approximation of a linear stochastic evolution equation with a positive-type memory term}

\author{Mih\'aly Kov\'acs}
\address{Department of Mathematics and Statistics, University of Otago, PO Box 56, Dunedin, 9054, New Zealand.}
\email{mkovacs@maths.otago.ac.nz}

%\author{Fredrik Lindgren}
%\address{Department of Mathematical Sciences, Chalmers University of Technology and University of Gothenburg, SE--412 96 G\"oteborg, Sweden.}
%\email{fredrik.lindgren@chalmers.se}
%    Information for first author
\author{Jacques Printems}
%    Address of record for the research reported here
\address{Laboratoire d'Analyse et de Math\'ematiques Appliqu\'ees CNRS UMR 8050, Universit\'e Paris--Est, 61, avenue du G\'en\'eral de Gaulle, 94 010 Cr\'eteil, France.}
%    Current address
%\curraddr{Department of Mathematics and Statistics,
%Case Western Reserve University, Cleveland, Ohio 43403}
\email{printems@u-pec.fr}
%    \thanks will become a 1st page footnote.
%\thanks{The first author was supported in part by NSF Grant \#000000.}

\date{}

%\dedicatory{This paper is dedicated to our advisors.}

%    General info
%\subjclass[2000]{Primary 60H15, 60H35}
%\keywords{Stochastic Volterra equation, fractional differential equation, finite elements method, convolution quadrature, Euler scheme, weak order}

\maketitle

\begin{abstract}
In this paper we are interested in the numerical approximation of the marginal distributions of the Hilbert space valued solution of a stochastic Volterra equation driven by an additive Gaussian noise. This equation can be written in the abstract It\^o form as
$$
\dd X(t) + \left ( \int_0^t b(t-s) A X(s) \, \dd s \right ) \, \dd t = \dd W^{_Q}(t),~t\in (0,T]; ~ X(0) =X_0\in H,
$$
\noindent where $W^Q$ is a $Q$-Wiener process on the Hilbert space $H$ and where the time kernel $b$ is the locally integrable potential $t^{\rho-2}$, $\rho \in (1,2)$, or slightly more general. The operator $A$ is unbounded, linear, self-adjoint, and positive on $H$. Our main assumption concerning the noise term is that $A^{(\nu- 1/\rho)/2} Q^{1/2}$ is a Hilbert-Schmidt operator on $H$ for some $\nu \in [0,1/\rho]$.
The numerical approximation is achieved via a standard continuous finite element method in space (parameter $h$) and an implicit Euler scheme and a Laplace convolution quadrature in time (parameter $\Delta t=T/N$).
%Let $X_h^N$ be the discrete solution at time $T$. Eventually let $\varphi : H\rightarrow \R$ is such that $D^2\varphi$ is bounded on $H$ but not necessarily bounded and suppose in addition that either its first derivative is bounded on $H$ and $X_0 \in L^1(\Omega)$ or $\varphi = \| \cdot \|^2$ and $X_0 \in L^2(\Omega)$.
We show that for $\varphi : H\rightarrow \R$ twice continuously differentiable test function with bounded second derivative,
$$
| \E \varphi(X^N_h) -  \E \varphi(X(T)) | \leq C \ln \left( \frac{T}{h^{2/\rho} + \Delta t} \right ) (\Delta t^{\rho \nu} + h^{2\nu}),
$$
\noindent for any $0\leq \nu \leq 1/\rho$. This is essentially twice the rate of strong convergence under the same regularity assumption on the noise.
\end{abstract}

\section{Introduction}

%This article deals with the approximation of the distribution of the real valued stochastic process $X=X(\xi,t)$ solution of the following stochastic partial differential equation: given a bounded domain
%$\mathcal O$ of $\R^d$, $d\geq 1$ and $T>0$
%\begin{equation}\label{eq:0}
%\der{X}{t}{}(\xi,t) -  \int_0^t b(t-s) \Delta X(\xi,s) \, \dd s  = \der{\eta}{t}{}(\xi,t), \quad t\in (0,T], \quad \xi\in {\mathcal O},
%\end{equation}
%\noindent together with Dirichlet $0$ boundary conditions and initial condition $X(0)=X_0$,
Let $H=L^2(\mathcal{O})$ be the real separable Hilbert space of square integrable functions on some bounded domain $\mathcal{O}$ of $\R^d$, $d\geq 1$, with smooth or convex polygonal boundary equipped with the usual inner product denoted by $(\cdot,\cdot)$ and induced norm $\|\cdot\|$.
For $T>0$, we consider the following stochastic Volterra type equation written in the abstract It\^o form as
\begin{equation}\label{eq:stovolterra}
\dd X(t) + \left ( \int_0^t b(t-s) A X(s) \, \dd s \right ) \, \dd t = \dd W^{_Q}(t), \quad t\in (0,T]; \quad X(0)=X_0,
\end{equation}
\noindent where $-A=\Delta$ is the Dirichlet Laplacian with domain $D(A)=H^2(\mathcal{O})\cap H^1_0(\mathcal{O})$ and $W^{_Q}$ is an $H$-valued Wiener process on the probability space $(\Omega, \mathcal{F}, \mathbf{P})$ endowed with the normal filtration $\{{\mathcal F}_t\}_{t\ge 0}$ generated by $W^{_Q}$ with possibly unbounded covariance operator $Q$. We note that, strictly speaking, the process $W^{_Q}$ is $H$-valued if and only if $Q$ is a trace class operator. The initial condition $X(0)=X_0$ is $H$-valued and $\mathcal{F}_0$-measurable.
The convolution kernel $b$ is given by
\begin{equation}\label{eq:beta}
b(t) = t^{\rho-2}/\Gamma(\rho-1), \quad 1 < \rho <2,
\end{equation}
or could be somewhat more general which is made precise later in Section \ref{ch:pre}. Such equations are called stochastic Volterra equations and can be used in the modeling of diffusion of heat in materials with memory or in viscoelasticity (see \cite{CDaPP,McLThomee93} and references therein). Because of the weak singularity of the kernel $b$ at 0, the deterministic equation exhibit certain smoothing characteristics similar to that of parabolic type evolution equations.

%For example, when $\eta = 0$; that is, in the deterministic case, there is a smoothing property which correspond to the inequality
%\begin{equation}\label{eq:regular}
%\| X^{(m)}(t)\|_{H^{2r}({\mathcal O})} \leq C \, t^{-(\beta+1)r -m} \|\xi\|_{L^2({\mathcal O})},
%\end{equation}
% \noindent where $|r|\leq 1$ if $m\ge 1$ and where $0\leq r \leq 1$ if $m=0$, but with no further smoothing in the space variables (see e.g. \cite[Theorem 5.5]{McLThomee93}). Nevertheless, because of the time non local term, the solution of the deterministic equation can not be made explicit thanks to a semi-group.

We study the numerical approximation of  $\{X(t)\}_{t\in [0,T]}$ by an implicit Euler scheme and a Laplace transform convolution quadrature in time together with a finite element method in space. Let $N\geq 1$ and $\Delta t = T/N$. We set $t_n = n\Delta t$, $n=0,\dots, N$. Let $\{\TT_h\}_{0<h<1}$ denote a family of triangulations of $\mathcal O$, with mesh size $h>0$ and consider
   finite element spaces $\{ V_h \}_{0<h<1}$, where $V_h\subset H^1_0(\mathcal{O})$ consists of continuous piecewise linear functions vanishing at the boundary of $\mathcal{O}$.  Let $X^n_h \in V_h$ be the numerical approximation of $X(t_n)$ defined via the difference equations
\begin{equation}\label{eq:alg}
(X^n_h,v_h) - (X^{n-1}_h,v_h) + \Delta t \sum_{k=1}^n \omega_{n-k} (\nabla X^k_h,\nabla v_h) = (w^n,v_h),
\end{equation}
\noindent for any $n\geq 1$, with the initial condition
$$
(X^0_h,v_h) = (X_0,v_h),
$$
\noindent for any $v_h\in V_h$, where we have set $w^n = W^{_Q}(t_n) - W^{_Q}(t_{n-1})$.

 Our specific choice of the weights $\{\omega_k\}_{k\geq 0}$ is stemming from the deterministic framework of \cite{lubich88,lubich88II}. Indeed, it can be easily seen that most of the
qualitative properties
of the solution of (\ref{eq:stovolterra}) with $Q=0$ depend heavily on the way the frequencies of the time kernel $b$ are
distributed. For example, in the case where $b$ is a Dirac mass at 0, we formally recover the heat equation and if
$b$ is regular enough, we recover the wave equation.  For that reason, the weights $\{\omega_k\}_{k\geq 0}$ in (\ref{eq:alg}) have been chosen
such as to mimic, at the level of the backward Euler scheme, the spectral properties of the time kernel $b$. Using the Laplace transform
$\widehat b$ of $b$, these weights can be obtained via the relation
\begin{equation} \label{eq:weight}
\widehat b\left ( \frac{1 - z}{\Delta t} \right ) = \sum_{k\geq 0} \omega_k z^k, \quad |z|<1.
\end{equation}

Introducing the "discrete Laplacian"
\begin{equation}\label{def:Ah}
  A_{h}:V_h\to V_h,   \quad
    \inner{A_{h} \psi}{ \chi} = \inner{ \nabla \psi}{\nabla \chi},\quad \psi,\chi \in V_h,
\end{equation}
and the orthogonal projector
$$
    P_{h}: H \to V_h,\quad
   \inner{P_{h} f}{ \chi} = \inner{ f} {\chi},\quad \chi \in V_h.
$$
we rewrite \eqref{eq:alg} in the operator form as
\begin{equation} \label{eq:full_scheme}
X^n_{h} - X^{n-1}_h + \Delta t \left ( \sum_{k=1}^{n} \omega_{n-k}\,  A_h X^k_h \right ) = P_hw^n, \quad n\geq 1,
\end{equation}
with $X^0_{h}=P_hX_0$.

If $\varphi$ is a twice differentiable real functional on $L^2({\mathcal O})$, not necessarily bounded and with not necessarily  bounded first derivative but with bounded second derivative and $A^{(\nu-\frac{1}{\rho})/2}Q^{1/2}$ is a Hilbert-Schmidt operator on $H$, then our main result can be stated as follows. Denoting the expectation by $\E$, the the so-called weak error can be estimated as
\begin{equation}\label{eq:weak}
| \E \varphi (X_h^ N) - \E \varphi(X(T)) | \leq C\ln\left(\frac{T}{h^{2/\rho}+\Delta t}\right) (\Delta t^{{\rho}\nu} + h^{2\nu} ),
\end{equation}
\noindent where $C$ may depend on $T$, $\varphi$ and the initial condition $X(0)$ but not on $h$ and $N$. Hence even in the presence of memory, similarly to parabolic and hyperbolic stochastic equations \cite{AL,debussche_printems,KLLweak,fullweak,Lind,wang}, the weak order is essentially twice the strong order, where the latter was studied in \cite{kovacsprintems}. Note that in \eqref{eq:weak} we may even allow the test function $\varphi(x)=\|x\|^2$.

In particular, when $Q=I$ (white noise), $\nu$ has to be chosen in such a way that $A^{(\nu -1/\rho)/2}$ is an Hilbert-Schmidt operator on $H$. Taking the asymptotics of the eigenvalues of the Laplacian into account we must have $\nu < 1/\rho - d/2$. This yields $d=1$ and a rate of convergence in time of $(1 - \rho/2)_-$ and in space of $(2/\rho - 1)_-$.

A popular method used in the study the weak convergence of approximations of stochastic equations relies on the associated Kolomogorov equation (see \cite{kloeden,milsteintretyakov,talay}). The global weak error $\E \varphi (X_h^N) - \E \varphi(X(T))$ is usually decomposed in a sum of local weak errors which are then expressed using the solution of the associated Kolmogorov equation.

Unfortunately, the presence of a  non-local term in time in \eqref{eq:stovolterra} prevents us to use the same method directly because the process $\{X(t)\}_{t\in [0,T]}$ is not Markovian and hence no Kolmogorov equation is associated with \eqref{eq:stovolterra}. However, since the equation is linear and the non-local term is in the drift part, we use the same kind of method as in \cite{debussche_printems,KLLweak,fullweak,Lind} to remove the drift and obtain an equation which has a Markovian solution. Hence there is an associated Kolmogorov's equation with no drift but with a time-dependent covariance operator.

The outline of the paper is as follows. In Section 2 we introduce basic notations and the main assumptions on $b$, together with the existence, uniqueness and regularity properties of $\{X(t)\}_{t\in [0,T]}$. In Section 3 we recall and prove some deterministic estimates concerning the solutions of \eqref{eq:stovolterra} and \eqref{eq:full_scheme} with $Q=0$. In particular, we recall the discrete mild formulation
of \eqref{eq:full_scheme} and establish its regularizing properties (Theorem \ref{thm:det}). In Section 4 we introduce the results which are needed
 in order to accommodate random initial data and unbounded test functions $\varphi$. This section ends with a representation formula of the weak error (Theorem \ref{main}) which symmetrize the role played by the discrete and the continuous solutions. Finally, in Section 5, we state and prove the main convergence result (Theorem \ref{theo:weak}).

\section{Preliminairies}\label{ch:pre}
In this section we introduce notation and collect some preliminary results. We also state the hypothesis on the convolution kernel $b$.
We denote the set of bounded linear operators on $H$ by $\mathcal{B}(H)$ endowed with the usual norm $\|\cdot\|_{\mathcal{B}(H)}$, where we drop the subscript from the norm if it is clear from the context. Let $\HS$ denotes the \textit{Hilbert-Schmidt}
operators on $H$; that is, $T\in \HS$ if $T$ is linear and for an orthonormal basis (ONB) $\{e_k\}_{k\in \mathbb{N}}$ of $H$
$$
\|T\|_{\HS}^2=\sum_{k\in \mathbb{N}}\|Te_k\|^2<\infty.
$$
\noindent In this case the sum is independent of the ONB. If a linear operator $T$ on $H$ can be written as
$$
Tx=\sum_{k\in \mathbb{N}}(x,a_k)b_k,~x\in H,
$$
with $\sum_{k\in \mathbb{N}}\|a_k\|\,\|b_k\|<\infty$, then $T$ is called a \textit{trace-class} operator. The trace-norm of $T$ is then defined
as
$$
\|T\|_{\Tr}=\inf\{\sum_{k\in \mathbb{N}}\|a_k\|\,\|b_k\|:~Tx=\sum_{k\in \mathbb{N}}(x,a_k)b_k,~x\in H\}.
$$
If $T$ is a trace class operator then for any ONB, the trace of $T$ is defined as
$$
\Tr(T)=\sum (Te_k,e_k)
$$
is finite and the sum is independent of the ONB. Both Hilbert-Schmidt and trace class operators are bounded.
If $T\ge 0$ is a symmetric trace class operator then, $\Tr(T)=\|T\|_{\Tr}$. It is well-known that if $T_1,T_2\in \HS$, then $T_1T_2$ is trace class and
\begin{equation}\label{eq:trhs}
|\Tr(T_1T_2)|\le \|T_1\|_{\HS}\|T_2\|_{\HS}.
\end{equation}
Furthermore, if $T\in \HS$ and $S\in \mathcal{B}(H)$, then $TS$ and $ST$ are in $\HS$ and
\begin{equation}\label{eq:bdhs}
\max\{\|TS\|_{\HS},\|ST\|_{\HS}\}\le \|T\|_{\HS}\|S\|_{\mathcal{B}(H)}.
\end{equation}
For $1\le p<\infty$ we denote by $L^p(\Omega,H)$ the space of $H$-valued random variables $X$ such that $$\|X\|_{L^p(\Omega,H)}=(\mathbf{E}\|X\|^p)^{1/p}<\infty.$$

It is well known that our assumptions on $A$ and on the spatial domain $\mathcal{O}$ implies the existence of a sequence of nondecreasing positive real numbers $\{\lambda_k\}_{k\geq 1}$ and an orthonormal basis
$\{e_k\}_{k\geq 1}$ of $H$ such that
\begin{equation}\label{eq:spectral}
Ae_k = \lambda_k e_k, \quad \lim_{k\rightarrow +\infty} \lambda_k = +\infty.
\end{equation}
%Thanks to this spectral decomposition, the cylindrical Wiener process $W(t)$ on $H$ can formally be written as
%$$
%W(t) = \sum_{k\geq 1} \beta_k(t) e_k,
%$$
%\noindent where $\{\beta_k\}_{k\geq 1}$ denotes a family of mutually independent real Brownian motions and then $W^{_Q}(t) = Q^{1/2} W(t)$.

Next, we define, by means of the spectral decomposition of $A$, the fractional powers $A^s$ of $A$ for $s\in \mathbb{R}$. That is,
for $s>0$ we set
\begin{equation}\label{eq:fp}
A^sx=\sum_{k\ge 1}\lambda_k^s(x,e_k)e_k,
\end{equation}
with domain $D(A^s)$ being all $x\in H$ for which the sum converges in $H$. In particular, $D(A^0)=H$. For $s<0$ we define
$A^sx$ as in \eqref{eq:fp} for all $x\in H$ and $D(A^s)$ to be the completion of $H$ with respect to
the norm of $\|x\|_s=\|A^sx\|$.

Next we state the main hypothesis on the convolution kernel $b$.
\begin{hypothesis}\label{hyp:b}
The kernel $0\neq b\in L^1_{loc}(\R_+)$, is $3$-monotone; that is, $b$, $-\dot b$ are nonnegative, nonincreasing, convex, and $\lim_{t\to \infty}b(t)=0$. Furthermore,
\begin{equation}\label{eq:sector}
\rho  := 1 + \frac{2}{\pi}\sup \{ | \mathrm{arg} \, \widehat b(\lambda) |, \; \Re\lambda >0 \} \in (1,2).
\end{equation}
\end{hypothesis}
In the special case of the Riesz kernel given in \eqref{eq:beta} one can easily show that $\rho$ in the exponent coincides with the one defined in \eqref{eq:sector}.
In order to obtain non-smooth data estimates for the deterministic equation we need the following additional hypothesis.
\begin{hypothesis}\label{hyp:b-anal}
The Laplace transform $\widehat{b}$ of $b$ can be extended to an analytic function in a sector $\Sigma_\theta$ with $\theta>\pi/2$ and $|\widehat{b}^{(k)}(z)|\le C|z|^{1-\rho-k}$, $k=0,1$, $z\in \Sigma_\theta$.
\end{hypothesis}
In the sequel we discuss properties of the solution of \eqref{eq:stovolterra}.
The weak solution of \eqref{eq:stovolterra} is a mean-square continuous $H$-valued process satisfying
\begin{equation*}
  \left( X(t), \eta \right)
 + \int_0^t \int_0^rb(r-s)\left( X(s), A^*\eta\right)\,\dd s\,\dd r
=\left( X_0,\eta\right)
 + \int_0^t \left(\eta, \,\dd W^{Q}(s) \right),
\end{equation*}
for all $\eta \in D(A^*)$ almost surely for all $t\in [0,T]$. Under Hypothesis \ref{hyp:b}, if $W^Q\equiv 0$; that is, the deterministic case, then there exists a resolvent family $\{S(t)\}_{t\geq 0} \subset {\mathcal B}(H)$ which is strongly continuous for $t\ge 0$, differentiable for $t>0$ and uniformly bounded by 1, see \cite[Corollary 1.2 and Corollary 3.3]{pruss}. The unique weak solution in the deterministic case is given by
$X(t) = S(t)X_0,~ t\in [0,T].$

The next result, which can be found in \cite{CDaPP} and \cite{kovacsprintems} summarizes the existence, uniqueness and regularity of weak solutions of \eqref{eq:stovolterra}.
\begin{proposition}\label{prop:eur}
Let $b$ satisfy Assumption \ref{hyp:b} and let $\|A^{(\nu-\frac{1}{\rho})/2}Q^{\frac12}\|_{\HS}<\infty$ and $A^{\frac{\nu}{2}}X_0\in L^2(\Omega,H)$ for some $\nu\ge 0$. Then \eqref{eq:stovolterra} has a unique weak solution given by the variation of constants formula
\begin{equation}\label{eq:vcf}
X(t)=S(t)X_0+\int_0^tS(t-s)\,\dd W^Q(s),~t\ge 0,
\end{equation}
with $\|A^{\frac{\nu}{2}}X(t)\|_{L^2(\Omega,H)}\le C$, for some $C>0$ and for all $t\ge 0$. Furthermore, $X$ has a version which is H\"older continuous of order less than $\min(\frac12,\frac{\rho \nu}{2})$.
\end{proposition}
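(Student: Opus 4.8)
The plan is to verify that the variation of constants formula \eqref{eq:vcf} defines the unique weak solution, and then to reduce every quantitative statement to scalar estimates on the resolvent via the spectral calculus of $A$. Since the kernel $b$ is scalar and $A$ is self-adjoint with eigenpairs $\{(\lambda_k,e_k)\}$, the resolvent family is diagonal, $S(t)e_k = s(t;\lambda_k)e_k$, where $s(\cdot;\lambda)$ solves $s'(t;\lambda) + \lambda\int_0^t b(t-r)s(r;\lambda)\,\dd r = 0$, $s(0;\lambda)=1$, with Laplace transform $\widehat s(z;\lambda) = (z+\lambda\widehat b(z))^{-1}$. In particular $S(t)$ commutes with every fractional power $A^{\alpha}$, and $\norm{S(t)}_{\mathcal{B}(H)}\le 1$ gives $\abs{s(t;\lambda)}\le 1$. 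For existence I would insert \eqref{eq:vcf} into the weak formulation and use the resolvent equation for $S$ together with a stochastic Fubini theorem; uniqueness is immediate from linearity, since the difference of two weak solutions is a deterministic weak solution with $X_0=0$ and $W^Q\equiv 0$, hence vanishes by uniqueness in the deterministic case.

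For the a priori bound I would split \eqref{eq:vcf} into $S(t)X_0$ and the stochastic convolution. For the former, commutativity and $\norm{S(t)}\le 1$ give $\norm{A^{\nu/2}S(t)X_0}_{L^2(\Omega,H)} = \norm{S(t)A^{\nu/2}X_0}_{L^2(\Omega,H)}\le \norm{A^{\nu/2}X_0}_{L^2(\Omega,H)}$, uniformly in $t$. For the convolution the It\^o isometry and the diagonalisation yield
\[
\E\norm{A^{\nu/2}\int_0^t S(t-r)\,\dd W^Q(r)}^2 = \sum_{k}(Qe_k,e_k)\,\lambda_k^{\nu}\int_0^t s(r;\lambda_k)^2\,\dd r,
\]
so that, after factoring out $\sum_k (Qe_k,e_k)\lambda_k^{\nu-1/\rho} = \norm{A^{(\nu-1/\rho)/2}Q^{1/2}}_{\HS}^2$, everything reduces to the scalar bound $\sup_{\lambda>0}\lambda^{1/\rho}\int_0^\infty s(r;\lambda)^2\,\dd r<\infty$. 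This is the crux, and it reflects the self-similarity $s(t;\lambda) = g(\lambda^{1/\rho}t)$ visible in the model case $\widehat b(z)=z^{1-\rho}$: the substitution $u=\lambda^{1/\rho}r$ turns the integral into $\int_0^{\lambda^{1/\rho}t} g(u)^2\,\dd u \le \int_0^\infty g(u)^2\,\dd u$, finite because $\abs{g}\le 1$ and $g$ decays like $u^{-\rho}$ with $\rho>\tfrac12$. For the general kernel of Hypothesis \ref{hyp:b}--\ref{hyp:b-anal} I would instead invoke the deterministic smoothing estimates of the type $\norm{A^{\alpha}S(t)}\le Ct^{-\rho\alpha}$ produced by the sectorial Laplace bounds, i.e.\ the estimates established in Section \ref{ch:pre} and Theorem \ref{thm:det}.

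Finally, for path regularity I would estimate the $L^2(\Omega,H)$ modulus of continuity and apply Kolmogorov's criterion to obtain a continuous version. Writing, for $\tau<t$,
\[
X(t)-X(\tau) = \left(S(t)-S(\tau)\right)X_0 + \int_0^\tau\left(S(t-r)-S(\tau-r)\right)\dd W^Q(r) + \int_\tau^t S(t-r)\,\dd W^Q(r),
\]
the deterministic increment is controlled at order $\rho\nu/2$ (or better) through the scalar estimate $\lambda^{-\nu/2}\abs{s(t;\lambda)-s(\tau;\lambda)}\le C\abs{t-\tau}^{\rho\nu/2}$, a balancing argument on $\abs{g(\lambda^{1/\rho}t)-g(\lambda^{1/\rho}\tau)}$ that uses $\lambda_k\ge\lambda_1>0$. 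For the two stochastic terms the It\^o isometry reduces matters to $\int_0^{t-\tau}s(r;\lambda)^2\,\dd r$ and $\int_0^\tau\bigl(s(r+(t-\tau);\lambda)-s(r;\lambda)\bigr)^2\,\dd r$; bounding $\int_0^M g^2\le CM^{\beta}$ for any $\beta\in[0,1]$ and matching the resulting power $\lambda^{(\beta-1)/\rho}$ against the Hilbert--Schmidt weight $\lambda^{\nu-1/\rho}$ forces $\beta\le\min(1,\rho\nu)$, whence the H\"older order $\min(\tfrac12,\tfrac{\rho\nu}{2})$. The cap $\tfrac12$ is exactly the Brownian (It\^o) scaling limit carried by the $\int_\tau^t$ term, while $\tfrac{\rho\nu}{2}$ is the spatial smoothing contribution. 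I expect the main obstacle to be establishing the sharp scalar resolvent estimates---the decay, self-similar scaling, and increment bounds of $s(\cdot;\lambda)$---from the sectorial Laplace-transform hypotheses; once these are in hand, the reduction via the It\^o isometry and Kolmogorov's criterion is routine.
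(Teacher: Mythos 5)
The paper itself offers no proof of this proposition: it is quoted from \cite{CDaPP} and \cite{kovacsprintems}. Measured against the argument in those references, your sketch reconstructs the right architecture — diagonalization of $S(t)$ in the eigenbasis of $A$, reduction of the It\^o isometry to scalar bounds on $s(\cdot;\lambda)$, and Kolmogorov's criterion for the H\"older version — and your treatment of the Riesz kernel is correct: the self-similarity $s(t;\lambda)=g(\lambda^{1/\rho}t)$, the decay $g(u)\sim Cu^{-\rho}$, and the interpolation forcing $\beta\le\min(1,\rho\nu)$ all check out. You also correctly isolate $\sup_{\lambda>0}\lambda^{1/\rho}\int_0^\infty s(r;\lambda)^2\,\dd r<\infty$ as the crux.

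The genuine gap is in how you propose to get that crux bound for general kernels, which is the actual content of the proposition: it assumes only Hypothesis \ref{hyp:b}, so Hypothesis \ref{hyp:b-anal} and with it Theorem \ref{thm:det} are not available (and the smoothing estimates you cite live in Section 3, not Section 2). Your fallback — the pointwise smoothing estimate $\|A^{\alpha}S(t)\|\le Ct^{-\rho\alpha}$ — cannot deliver square integrability: at the required exponent $\alpha=1/(2\rho)$ it gives only $\lambda^{1/\rho}s(r;\lambda)^2\le Cr^{-1}$, and combining with $|s(r;\lambda)|\le 1$ repairs the origin but still leaves $\int^{\infty}r^{-1}\,\dd r=\infty$; no pointwise bound of order $t^{-1/2}$ can ever imply membership in $L^2(0,\infty)$. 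What is needed is the estimate of Proposition \ref{prop:sm}(iii) (stated there for $S_h$, quoted from \cite{kovacsprintems}), whose proof is a Plancherel/Parseval argument applied to $\widehat s(i\omega;\lambda)=(i\omega+\lambda\widehat b(i\omega))^{-1}$, where the sector condition \eqref{eq:sector} prevents cancellation between the two terms of the denominator and the substitution $\omega=\lambda^{1/\rho}u$ then yields a bound uniform in $\lambda$; this produces $L^2$ information directly, which pointwise decay cannot. Note the same estimate is what makes the stochastic convolution in \eqref{eq:vcf} well defined in the first place (Remark \ref{rem:sq}), so the gap affects your existence/stochastic-Fubini step as much as the moment bound; and the middle term $\int_0^\tau(S(t-r)-S(\tau-r))\,\dd W^Q(r)$ in your H\"older argument needs a further scalar increment estimate in $L^2$ that the proposal likewise leaves unproved.
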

\begin{remark}\label{rem:sq}
In particular, the stochastic integral in \eqref{eq:vcf} makes sense since $SQ^{1/2} \in L^2((0,T),\HS)$ (see the proof of \cite[Theorem 3.6]{kovacsprintems}).
\end{remark}
\section{Deterministic estimates}
In the following proposition we collect some of the smoothing properties of $\{S(t)\}_{t\ge 0}$ and $\{S_h(t)\}_{\ge 0}$,
where $\{S_h(t)\}_{t\ge 0}\subset V_h$ is the resolvent family of the deterministic equation
$$
\dot{u}_h(t)+\int_0^tb(t-s)A_hu_h(s)\,\dd s=0,~t>0, \quad u_h(0)=P_hu_0.
$$
We would like to note that in this paper the constant $C$ denotes a generic nonnegative constant that does not depend on the parameters $h,k,t,\Delta t$ and is not necessarily the same at every occurrence.
\begin{proposition}\label{prop:sm}
If $b$ satisfies Hypothesis \ref{hyp:b}, then the following estimates hold for the resolvent families $\{S(t)\}_{t\ge 0}$ and $\{S_h(t)\}_{t\ge 0}$ for some $C>0$.
\begin{itemize}
\item[(i)] $\max\{\|A^{\nu/2}S(t)\|,\|A^{\nu/2}_hS_h(t)P_h\|\}\le C \, t^{-\nu\rho/2}$, $0\le \nu \le 2/\rho$, $t>0$, $h>0$;
\item[(ii)]$\|\dot{S}(t)\|\le C \, t^{-1}$, $t>0$;
\item[(iii)]$\int_0^t\|A_h^{1/(2\rho)}S_h(s)P_hx\|^2\,\dd s\le C \, \|x\|^2$,  $t>0$, $h>0$.
\end{itemize}
\end{proposition}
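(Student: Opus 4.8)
The plan is to reduce all three estimates to scalar bounds for the Laplace symbol and then invert the Laplace transform along a suitable contour. The decisive feature is that every scalar bound will hold \emph{uniformly over the whole range $\lambda\in(0,\infty)$}; since $A_h$ is self-adjoint and positive on $V_h$, its eigenvalues are positive reals just like those of $A$ in \eqref{eq:spectral}, so the same uniform bounds handle $S(t)$ and $S_h(t)P_h$ at once and deliver $h$-independence automatically. Taking Laplace transforms in the resolvent equations gives $\widehat S(z)=(zI+\widehat b(z)A)^{-1}$ and $\widehat{S_h(\cdot)P_h}(z)=(zI+\widehat b(z)A_h)^{-1}P_h$, which by the spectral calculus \eqref{eq:fp} act on eigenvectors through the scalar symbol $\widehat s_\lambda(z)=(z+\lambda\widehat b(z))^{-1}$. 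Inverting, $S(t)=\frac{1}{2\pi i}\int_\Gamma e^{zt}(zI+\widehat b(z)A)^{-1}\,\dd z$, and likewise for $S_h(t)P_h$.

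The core step is a uniform resolvent estimate. Using the sector condition \eqref{eq:sector} that \emph{defines} $\rho$, together with the analyticity and growth of $\widehat b$ (cf. Hypothesis \ref{hyp:b-anal}), I would exhibit an angle $\theta'\in(\pi/2,\theta)$ so that for all $z\in\Sigma_{\theta'}$ and all $\lambda>0$ the symbol $z+\lambda\widehat b(z)$ never vanishes and
\[
|z+\lambda\widehat b(z)|\ge c\,(|z|+\lambda|\widehat b(z)|),\qquad c|z|^{1-\rho}\le|\widehat b(z)|\le C|z|^{1-\rho}.
\]
The lower bound holds because $z$ and $\lambda\widehat b(z)$ lie in cones whose combined opening is strictly less than $\pi$ (precisely the content of \eqref{eq:sector}, using $\rho<2$), so the two summands cannot cancel. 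This lets me deform $\Gamma$ to a Hankel contour inside $\Sigma_{\theta'}$, on whose rays $e^{zt}$ decays exponentially.

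For \emph{(i)}, the substitution $z=\zeta/t$ turns the norm into $\sup_{\lambda>0}\frac{\lambda^{\nu/2}}{|\zeta/t+\lambda\widehat b(\zeta/t)|}$; inserting the resolvent estimate and optimizing in $\lambda$ (the maximizer sits at the crossover $\lambda\asymp|z|^{\rho}$) bounds this supremum by $C\,t^{-\rho\nu/2}|\zeta|^{\rho\nu/2-1}$, after which the remaining scalar integral $\int_\Gamma |e^\zeta|\,|\zeta|^{\rho\nu/2-1}\,|\dd\zeta|$ converges for $0<\nu\le 2/\rho$ and gives the rate $t^{-\rho\nu/2}$; the endpoint $\nu=0$ is just $\|S(t)\|\le1$, and $\nu=2/\rho$ is the natural ceiling since there $A^{1/\rho}S\sim\dot S$, matching \emph{(ii)}. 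The identical computation with $A_h,P_h$ yields the discrete bound. For \emph{(ii)}, the representation $\dot S(t)=\frac{1}{2\pi i}\int_\Gamma z\,e^{zt}(zI+\widehat b(z)A)^{-1}\,\dd z$ reduces the norm to $\sup_\lambda\frac{|z|}{|z+\lambda\widehat b(z)|}\le C$ using the sectorial bound alone, and $z=\zeta/t$ produces $t^{-1}\int_\Gamma|e^\zeta|\,|\dd\zeta|=Ct^{-1}$.

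Finally, \emph{(iii)} I would deduce from \emph{(i)}: expanding $P_hx=\sum_j c_je_{h,j}$ in eigenvectors of $A_h$ with $\sum_j|c_j|^2\le\|x\|^2$, it suffices to bound $\lambda^{1/\rho}\int_0^\infty|s_\lambda(s)|^2\,\dd s$ uniformly in $\lambda>0$. Splitting at $s_0=\lambda^{-1/\rho}$ and using $|s_\lambda|\le1$ on $(0,s_0)$ and the case $\nu=2/\rho$ of \emph{(i)}, namely $|s_\lambda(s)|\le C(\lambda s^\rho)^{-1/\rho}$, on $(s_0,\infty)$, each piece contributes a constant, and summing over $j$ gives \emph{(iii)}. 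I expect the main obstacle to be the uniform resolvent estimate of the second step: extracting the sharp lower bound $|z+\lambda\widehat b(z)|\gtrsim|z|+\lambda|\widehat b(z)|$ throughout a sector of opening exceeding $\pi/2$ and matching it with the two-sided growth $|\widehat b(z)|\asymp|z|^{1-\rho}$, since it is this bound (rather than the contour manipulations) that fixes both the exponent $\rho\nu/2$ and the admissible range $\nu\le 2/\rho$, and it must hold uniformly in $\lambda$ to produce $h$-independence for $S_h$.
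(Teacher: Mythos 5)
Your overall skeleton is the right one, and it is essentially the machinery behind the results that the paper itself does not reprove but simply cites (\cite[Proposition 2.4, Lemma 3.1]{kovacsprintems}, resting on Pr\"uss's resolvent theory \cite{pruss}): diagonalize over the spectra of $A$ and $A_h$, prove scalar bounds for $\widehat s_\lambda(z)=(z+\lambda\widehat b(z))^{-1}$ that are uniform in $\lambda$, so that $S$ and $S_hP_h$ are handled simultaneously and $h$-independence is automatic; the non-cancellation estimate $|z+\lambda\widehat b(z)|\ge c\,(|z|+\lambda|\widehat b(z)|)$ on the right half-plane is indeed a correct consequence of \eqref{eq:sector}, since the two summands lie in cones of combined opening $\rho\pi/2<\pi$; and your derivation of (iii) from the endpoint $\nu=2/\rho$ of (i), splitting $\int_0^\infty$ at $s_0=\lambda^{-1/\rho}$, is clean and correct.

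There are, however, two genuine gaps, both located exactly where you flagged the ``main obstacle''. First, a hypothesis mismatch: Proposition \ref{prop:sm} assumes only Hypothesis \ref{hyp:b}, under which $\widehat b$ is analytic only in the open right half-plane, so the deformation to a Hankel contour inside $\Sigma_{\theta'}$, $\theta'>\pi/2$, is unavailable; and even if one grants Hypothesis \ref{hyp:b-anal} (which the proposition does not assume), the argument bound \eqref{eq:sector} is stated only for $\Re z>0$, and neither hypothesis guarantees that it persists in the larger sector, which your non-cancellation estimate there would require. Second, and decisively, the two-sided bound $c|z|^{1-\rho}\le|\widehat b(z)|\le C|z|^{1-\rho}$ is not a consequence of the hypotheses; it is the \emph{lower} bound that your rate computation actually uses (to turn $|z|^{\nu/2-1}|\widehat b(z)|^{-\nu/2}$ into $C|z|^{\rho\nu/2-1}$), and it fails for admissible kernels. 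For example, $b(t)=t^{\rho-2}e^{-t}/\Gamma(\rho-1)$ is completely monotone (hence $3$-monotone) with $\lim_{t\to\infty}b(t)=0$ and has $\widehat b(z)=(1+z)^{1-\rho}$, so \eqref{eq:sector} gives the same $\rho$ as for \eqref{eq:beta}; yet $\widehat b$ stays bounded as $z\to 0$, so $|\widehat b(z)|\ge c|z|^{1-\rho}$ fails precisely in the regime governing large $t$. Concretely, for this kernel and small $\lambda$ one has $s_\lambda(t)\approx e^{-\lambda t}$, and at $t=1/\lambda$ the claimed uniform scalar bound $\lambda^{\nu/2}|s_\lambda(t)|\le Ct^{-\rho\nu/2}$ would force $\lambda^{\nu/2-\rho\nu/2}\le Ce$, which blows up as $\lambda\to 0$: your scalar estimate is simply false in the claimed uniformity, so part (i) for all $t>0$ cannot be obtained this way (it survives only on bounded time intervals, or after restricting to $\lambda\ge\lambda_1>0$ and treating the small-$|z|$ part of the contour by a different argument). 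The proofs in the cited literature avoid this by staying on vertical contours $\Re z\asymp 1/t$ inside the half-plane, using the $1$-regularity $|z\widehat b'(z)|\le C|\widehat b(z)|$ (a consequence of the monotonicity and convexity in Hypothesis \ref{hyp:b}) to integrate by parts, and keeping $\widehat b$ itself in the estimates instead of replacing it by the power $|z|^{1-\rho}$.
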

\begin{proof}
The statements for $\{S(t)\}_{t\ge 0}$ are shown in \cite[Proposition 2.4]{kovacsprintems}. The estimate for
$\{S_h(t)\}_{t\ge 0}$ in $(i)$ can be proved exactly the same way while $(iii)$ is shown in the proof of \cite[Lemma 3.1.]{kovacsprintems}.
\end{proof}
%$$
In the sequel we derive the relevant deterministic error estimates.
Using the $z$-transform, it is shown \cite{kovacsprintems} that  the solution $X_h^n$ of \eqref{eq:full_scheme} can be written using a discrete constant variation formula as
\begin{equation}\label{eq:DVCF}
X_h^n = B_{n,h} P_h x + \sum_{k=0}^{n-1} B_{n-k,h} P_h w_{k+1},
\end{equation}
where, $B_{0,h}=I$ and
\begin{equation}\label{eq:bnh}
B_{k,h}P_h x=\int_0^\infty S_h(\Delta t s)P_h x\frac{e^{-s}s^{k-1}}{(k-1)!}\,\dd s\text{ for }k\ge 1.
\end{equation}

Let $\sigma(t):=\lceil \frac{t}{\Delta t}\rceil$ and define the piecewise constant operator function
$$
\tilde{B}_{h,N}(t):=B_{\sigma(t),h}P_h,\quad 0\le t\le T.
$$
\begin{theorem}\label{thm:det}
If $b$ satisfies Hypothesis \ref{hyp:b} and \ref{hyp:b-anal}, then
 the following estimates hold for some $C>0$ where $E_{h,N}(t)=\tilde{B}_{h,N}(t)-S(t)$, $N\Delta t=T$ and $h>0$.
\begin{equation}\label{hold}
\|S(t)-S(s)\|_{{\mathcal B}(H)}\le C \, s^{-\alpha}|t-s|^{\alpha},~0\le \alpha\le 1,~0<s\le t;
\end{equation}
\begin{equation}\label{dsmooth}
\|A^{\frac{\nu}{2}}\tilde{B}_{h,N}(t)\|\le C \, t^{-\rho \nu/{2}},~0\le \nu \le \frac{1}{\rho},~0< t\le T;
\end{equation}
\begin{equation}\label{err}
\|E_{h,N}(t)\|\le C \, t^{-\rho \nu}(\Delta t^{\rho \nu}+h^{2\nu}),~0\le \nu\le \frac{1}{\rho},~0< t\le T;
\end{equation}
\begin{equation}\label{serr}
\|A^{\frac{1/\rho-\nu}{2}}E_{h,N}(t)\|\le  C \, t^{-\frac12-\frac{\rho \nu}{2}}(\Delta t^{\rho \nu}+h^{2\nu}),~0\le \nu \le \frac{1}{\rho},~0< t\le T.
\end{equation}
\end{theorem}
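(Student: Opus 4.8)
The plan is to reduce all four estimates to a single endpoint bound at $\nu=1/\rho$ together with two elementary interpolation arguments, so that the genuine analytic work is concentrated in the deterministic error at $\nu=1/\rho$. Estimate \eqref{hold} is independent of the discretization and follows at once: since $\norm{S(t)}_{\mathcal B(H)}\le 1$ we have the trivial bound $\norm{S(t)-S(s)}\le 2$, while Proposition \ref{prop:sm}(ii) gives $\norm{S(t)-S(s)}\le\int_s^t\norm{\dot S(r)}\,\dd r\le C\int_s^t r^{-1}\,\dd r\le C\,s^{-1}(t-s)$ for $0<s\le t$, using $r^{-1}\le s^{-1}$ on the interval of integration. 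Writing $\norm{S(t)-S(s)}=\norm{S(t)-S(s)}^{1-\alpha}\,\norm{S(t)-S(s)}^{\alpha}$ and inserting the two bounds yields $C\,s^{-\alpha}(t-s)^{\alpha}$, which is \eqref{hold}.

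For \eqref{dsmooth} I would use the representation \eqref{eq:bnh} together with Proposition \ref{prop:sm}(i): for $k\ge 1$,
\[
\norm{A_h^{\nu/2}B_{k,h}P_h}\le\int_0^\infty\norm{A_h^{\nu/2}S_h(\Delta t\,s)P_h}\,\frac{e^{-s}s^{k-1}}{(k-1)!}\,\dd s\le C\,(\Delta t)^{-\rho\nu/2}\,\frac{\Gamma(k-\rho\nu/2)}{\Gamma(k)}.
\]
Since $0\le\rho\nu/2\le 1/2$, the ratio of Gamma functions is bounded by $C\,k^{-\rho\nu/2}$, and with $k=\sigma(t)\ge t/\Delta t$ this gives $\norm{A_h^{\nu/2}\tilde B_{h,N}(t)}\le C\,(k\Delta t)^{-\rho\nu/2}\le C\,t^{-\rho\nu/2}$. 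Estimate \eqref{dsmooth} then follows from the standard finite element norm equivalence $\norm{A^{\nu/2}v_h}\le C\norm{A_h^{\nu/2}v_h}$ for $v_h\in V_h$, valid since $0\le\nu/2\le 1/(2\rho)<1/2$.

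The heart of the matter is the single endpoint estimate at $\nu=1/\rho$, namely $\norm{E_{h,N}(t)}\le C\,t^{-1}(\Delta t+h^{2/\rho})$ for $0<t\le T$, which I would obtain by splitting $E_{h,N}(t)=[\tilde B_{h,N}(t)-S_h(t)P_h]+[S_h(t)P_h-S(t)]$ into a time-discretization (convolution quadrature) error and a spatial (semidiscrete finite element) error. Passing to the generating function, \eqref{eq:bnh} gives $\sum_{k\ge1}B_{k,h}\xi^k=\frac{\xi}{\Delta t}\,\widehat S_h\!\left(\frac{1-\xi}{\Delta t}\right)$ with $\widehat S_h(z)=(zI+\widehat b(z)A_h)^{-1}$, while $S_h(t)P_h$ and $S(t)$ are the inverse Laplace transforms of $\widehat S_h(z)$ and $\widehat S(z)=(zI+\widehat b(z)A)^{-1}$; Cauchy's formula and contour deformation then express the time part via the symbol defect of the quadrature and the space part via the resolvent difference $(zI+\widehat b(z)A_h)^{-1}P_h-(zI+\widehat b(z)A)^{-1}$. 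Hypothesis \ref{hyp:b-anal} supplies the sectoriality and the bounds $|\widehat b^{(k)}(z)|\le C|z|^{1-\rho-k}$ that keep $z+\widehat b(z)\lambda$ off the negative real axis and justify the deformation uniformly in $h$; the backward Euler quadrature, through $|\frac{1-e^{-z\Delta t}}{\Delta t}-z|\le C\Delta t|z|^2$, contributes the factor $\Delta t$, and the elliptic finite element (Ritz) error for the resolvent equation $(z+\widehat b(z)A)u=f$ contributes $h^{2/\rho}$, the fractional spatial order reflecting the weight $\widehat b(z)\sim|z|^{1-\rho}$; both carry the $t^{-1}$ singularity. I expect this endpoint estimate to be the main obstacle: one must track the resolvent bounds uniformly in $h$ and balance the quadrature accuracy against the singularity of the symbol at the origin, which is exactly the nonsmooth-data convolution quadrature and finite element analysis of \cite{lubich88,lubich88II} adapted to $A_h$, cf.\ \cite{kovacsprintems}.

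Granting the endpoint bound, the two remaining estimates follow by interpolation. The quantity $\norm{E_{h,N}(t)}$ satisfies both the uniform bound $\norm{E_{h,N}(t)}\le C$ — from $\norm{S(t)}\le 1$ and, via \eqref{eq:bnh} and the boundedness of $S_h$, $\norm{\tilde B_{h,N}(t)}\le C$ — and the endpoint bound; writing $\norm{E_{h,N}(t)}=\norm{E_{h,N}(t)}^{1-\rho\nu}\norm{E_{h,N}(t)}^{\rho\nu}$ gives $\norm{E_{h,N}(t)}\le C\,t^{-\rho\nu}(\Delta t+h^{2/\rho})^{\rho\nu}\le C\,t^{-\rho\nu}(\Delta t^{\rho\nu}+h^{2\nu})$, using the subadditivity of $x\mapsto x^{\rho\nu}$ and $(h^{2/\rho})^{\rho\nu}=h^{2\nu}$; this is \eqref{err}. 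For \eqref{serr} I would instead apply the spectral interpolation inequality $\norm{A^{(1/\rho-\nu)/2}y}\le\norm{A^{1/(2\rho)}y}^{1-\rho\nu}\norm{y}^{\rho\nu}$ to $y=E_{h,N}(t)x$, combining the endpoint bound with $\norm{A^{1/(2\rho)}E_{h,N}(t)}\le C\,t^{-1/2}$, the latter being an immediate consequence of \eqref{dsmooth} and Proposition \ref{prop:sm}(i), both taken at $\nu=1/\rho$, via the triangle inequality. The $t$-exponents combine to $-\tfrac12-\tfrac{\rho\nu}{2}$ and, bounding $(\Delta t+h^{2/\rho})^{\rho\nu}$ by $\Delta t^{\rho\nu}+h^{2\nu}$ as before, one obtains \eqref{serr}.
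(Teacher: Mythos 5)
Your overall architecture --- endpoint bounds plus interpolation --- matches the paper's, and \eqref{hold} and \eqref{serr} are proved essentially as in the paper (the paper writes the moment inequality as $\|A^{\alpha}E_{h,N}(t)\|\le\|E_{h,N}(t)\|^{1-2\rho\alpha}\|A^{1/(2\rho)}E_{h,N}(t)\|^{2\rho\alpha}$ with $\alpha=(1/\rho-\nu)/2$, which is exactly your inequality, and it bounds $\|A^{1/(2\rho)}E_{h,N}(t)\|\le Ct^{-1/2}$ by the same triangle-inequality argument). Two places differ. For \eqref{dsmooth} your route is genuinely different and cleaner: you bound $\|A_h^{\nu/2}B_{k,h}P_h\|$ for all $k\ge 1$ and all $\nu\in[0,1/\rho]$ in one stroke via $\Gamma(k-\rho\nu/2)/\Gamma(k)\le Ck^{-\rho\nu/2}$, whereas the paper treats the endpoint $\nu=2/\rho$ for $k\ge 2$, must handle $k=1$ separately by Cauchy--Schwarz together with the $L^2$-smoothing of Proposition \ref{prop:sm}(iii) (the defining integral for $B_{1,h}$ diverges at that endpoint), and then interpolates; your argument avoids Proposition \ref{prop:sm}(iii) entirely, at the harmless price of only reaching $\nu\le 1/\rho$, which is all the theorem claims. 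For \eqref{err}, the crux, the paper does not rederive the deterministic error estimates: it splits at the grid point $t_k$ into $e_1=\|B_{h,k}-S_h(t_k)\|$ and $e_2=\|S_h(t_k)-S(t_k)\|$, quotes \cite{Lubich_et_al1996} for $e_1\le Ct_k^{-1}\Delta t$ and $e_2\le Ct_k^{-\rho}h^2$, interpolates each against boundedness, and uses \eqref{hold} to pass from $t_k$ to $t$. You instead propose to reprove a combined endpoint bound $\|E_{h,N}(t)\|\le Ct^{-1}(\Delta t+h^{2/\rho})$ by contour integration; your outline has the correct ingredients (it is precisely the nonsmooth-data analysis of \cite{Lubich_et_al1996}), but as written it is a plan rather than a proof, and it glosses over two points: the spatial error naturally comes out as $t^{-\rho}h^2$, so reaching $t^{-1}h^{2/\rho}$ requires an extra interpolation (of $\min\{1,t^{-\rho}h^2\}$, or at the resolvent level); and your splitting $\tilde B_{h,N}(t)-S_h(t)P_h$ at non-grid times requires H\"older continuity of $S_h$ uniformly in $h$ (an analogue of Proposition \ref{prop:sm}(ii) that the paper never states), whereas the paper's splitting at $t_k$ needs \eqref{hold} for $S$ only. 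Citing \cite{Lubich_et_al1996} as the paper does, or supplying the $S_h$ analogue of \eqref{hold}, closes this; with that, your interpolation from the endpoint using subadditivity of $x\mapsto x^{\rho\nu}$ delivers \eqref{err} exactly as in the paper.
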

\begin{proof}
It follows from \cite[Corollary 3.3]{pruss} that $\|\dot{S}(t)x\|\le C t^{-1}\|x\|$ for all $x\in H$ and $t>0$. Thus, for $0<s\le t$, we have
$$
\|S(t)x-S(s)x\|\le \int_s^t\|\dot{S}(r)x\|\,\dd r\le C\|x\|\int_s^t r^{-1} \,\dd r \le C\|x\| s^{-1}|t-s|.
$$
Since we also have that $\|S(t)x-S(s)x\|\le 2 \|x\|$, the inequality in \eqref{hold} follows.

To show \eqref{dsmooth}, first note that it follows from \eqref{eq:bnh} and Proposition \ref{prop:sm} (i) with $\nu=0$ that
\begin{equation}\label{bcon}
\|B_{k,h}\|\le C,\text{ for all}~k\ge 1,~h>0.
\end{equation}
 From \eqref{eq:bnh} and Proposition \ref{prop:sm} (i) with $\nu=2/\rho$ we conclude that, for $k\ge 2$ and $h>0$,
\begin{equation}\label{eq:k2}
\begin{aligned}
&\|A_h^{1/\rho}B_{k,h}P_hx\|\le C\|x\| (\Delta t)^{-1} \int_0^\infty \frac{e^{-s}s^{k-2}}{(k-1)!}\,\dd s\\
&\quad = C\|x\| ((k-1)\Delta t)^{-1} \int_0^\infty\frac{e^{-s}s^{k-2}}{(k-2)!}\,\dd s=C\|x\|t^{-1}_{k-1}
=C\|x\|\frac{k}{k-1}t^{-1}_{k}\le C\|x\|t^{-1}_{k}.
\end{aligned}
\end{equation}
For $k=1$, by H\"older's inequality and Proposition \ref{prop:sm} (iii), we have
\begin{equation}\label{eq:k1}
\begin{aligned}
\|A_h^{1/(2\rho)}B_{1,h}P_hx\|&\le \int_0^{\infty}\|A_h^{1/(2\rho)}S_h(\Delta t s)x\|e^{-s}\,\dd s\\
&\le
C \left(\int_0^{\infty}\|A_h^{1/(2\rho)}S_h(\Delta t s)P_hx\|^2\,\dd s\right)^{1/2}\le C(\Delta t)^{-1/2}\|x\|.
\end{aligned}
\end{equation}
By interpolation, using \eqref{bcon}, \eqref{eq:k1} and \eqref{eq:k2} we conclude that
\begin{equation*}
\|A_h^{\frac{\nu}{2}}B_{h,k}P_h\|\le Ct_k^{-\frac{\rho \nu}{2}},~0\le \nu \le \frac{1}{\rho},~k\ge 1,~h>0.
\end{equation*}
Since for $\delta\in [0,1/2]$ and $v_h\in S_h$ we have that $\|A^\delta v_h\|\le \|A_h^{\delta}v_h\|$ it also follows that
\begin{equation*}
\|A^{\frac{\nu}{2}}B_{h,k}P_h\|\le Ct_k^{-\frac{\rho \nu}{2}},~0\le \nu \le \frac{1}{\rho},~k\ge 1,~h>0.
\end{equation*}
Finally,  for $t\in (t_{j-1},t_j]$, $j\ge 1$, we see that
$$
\|A^{\frac{\nu}{2}}\tilde{B}_{h,N}(t)\|=\|A^{\frac{\nu}{2}}B_{h,j}P_h\|\le Ct_j^{-\frac{\rho \nu}{2}}\le Ct^{-\frac{\rho \nu}{2}},~0\le \nu \le \frac{1}{\rho},~h>0.
$$
and the proof of \eqref{dsmooth} is complete.

Next we prove \eqref{err}. First, we write
$$
\|B_{h,k}P_h -S(t_k)\|\le\|B_{h,k}-S_h(t_k)\|+\|S_h(t_k)-S(t_k)\|:=e_1+e_2.
$$
It is show in \cite{Lubich_et_al1996} that if $b$ satisfies Hypothesis \ref{hyp:b-anal}, then
$$
e_1\le C t_k^{-1}\Delta t \text{ and }e_2\le C t_k^{-\rho}h^2,~k\ge 1,~h>0.
$$
Furthermore, we also have that $\max\{e_1,e_2\}\le C$ by Proposition \ref{prop:sm} with $\nu=0$, and thus
\begin{equation}\label{eq:lst}
\|B_{h,k}P_h -S(t_k)\|\le C t_k^{-\rho \nu}(\Delta t^{\rho \nu}+h^{2\nu}), ~0\le \nu \le 1/\rho,~k\ge 1,~h>0.
\end{equation}
Next, for $t\in (t_{k-1},t_k]$, $k\ge 1$, we have by \eqref{hold} and \eqref{eq:lst}, that
\begin{align*}
\|E_{h,N}(t)\|&\le \|B_{h,k}P_h-S(t_k)\|+\|S(t_k)-S(t)\|\\
&\le C t_k^{-\rho \nu}(\Delta t^{\rho \nu}+h^{2\nu})+Ct_k^{-\rho \nu}\Delta t^{\rho s}\\
&\le Ct^{-\rho \nu}(\Delta t^{\rho \nu}+h^{2\nu}), ~0\le \nu \le 1/\rho,~k\ge 1,~h>0.
\end{align*}
which finishes the proof of \eqref{err}.

Finally, by interpolation, for $0\le \alpha\le 1/(2\rho)$ have that
\begin{align*}
\|A^\alpha E_{h,N}(t)\|&\le \|E_{h,N}(t)\|^{1-2\rho \alpha}\|A^{1/(2\rho)}E_{h,N}(t)\|^{2\rho \alpha}\\
&\le  \|E_{h,N}(t)\|^{1-2\rho \alpha}\left(\|A^{1/(2\rho)}S(t)\|^{2\rho \alpha}+\|A^{1/2\rho}\tilde{B}_{h,N}(t)\|^{2\rho \alpha}\right).
\end{align*}
Setting $\alpha=\frac{1/\rho-\nu}{2}$, $0\le \nu \le 1/\rho$, and using Proposition \ref{prop:sm} (i), \eqref{dsmooth} and \eqref{err} all with $\nu=1/\rho$ the estimate in \eqref{serr} follows.
\end{proof}

\section{Error representation}

%{\sf BLABLA SUR LES DEFINITIONS DES DIFFERENTIELLES DE PHI, ETC ...}

%We denote by $C_b(H)$ the space of all the mapping $\varphi:H\rightarrow \R$ which are continuous and bounded.
Our main assumptions concerning $\varphi$, depending on the initial data, are
\begin{equation}\label{eq:phi0}
\varphi \in C(H,\mathbb{R}), D\varphi \in C(H,H)\text{ and }D^2\varphi \in C_b(H,\mathcal{B}(H)).
\end{equation}
or
\begin{equation}\label{eq:phi1}
\varphi \in C(H,\mathbb{R}), D\varphi \in C_b(H,H)\text{ and }D^2\varphi \in C_b(H,\mathcal{B}(H)),
\end{equation}
where $C(X,Y)$ and $C_b(X,Y)$ denote the space of continuous resp. continuous and bounded functions $f:X\to Y$ and $D$ denotes the Fr\'echet derivative.
The next lemma and its corollary are needed in the proof of Theorem \ref{main} to accommodate  random initial data and test functions $\varphi$ that are unbounded with possibly an unbounded first derivative. For bounded test functions $\varphi$ the next result can be found, in for example, \cite[Proposition 1.12]{DPZ}.

\begin{lemma}\label{lem:ind}
Let $\varphi:H\to \mathbb{\R}$ be measurable such that $|\varphi(x)|\le p_N(\|x\|)$ where $p_N$ is a real polynomial of degree $N$ with non-negative coefficients. Let $(\Omega,\mathcal{F},P)$ be a probability space and $\mathcal{G}\subset \mathcal{F}$ is a sub sigma-algebra of $\mathcal{F}$. Let $\xi_1,\xi_2\in L^N(\Omega,H)$ be $H$-valued random variables such that $\xi_1$ is $\mathcal{G}$-measurable and $\xi_2$ is independent of $\mathcal{G}$.  If we define $u:H\to \mathbb{R}$ by
$u(x)=\mathbf{E}(\varphi(x+\xi_2))$, $x\in H$, then, almost surely, $u(\xi_1)=\mathbf{E}(\varphi(\xi_1+\xi_2)|\mathcal{G})$.
\end{lemma}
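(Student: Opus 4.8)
The plan is to reduce to the bounded case, recorded in \cite[Proposition 1.12]{DPZ}, by truncating $\varphi$ and passing to the limit. Before doing so, I would first record that every object in the statement is well defined. Since $p_N$ has non-negative coefficients and $\|\xi_1+\xi_2\|\le \|\xi_1\|+\|\xi_2\|$, we have $p_N(\|\xi_1+\xi_2\|)\le p_N(\|\xi_1\|+\|\xi_2\|)$, and expanding the latter into monomials $\|\xi_1\|^j\|\xi_2\|^{N-j}$ and using $\xi_1,\xi_2\in L^N(\Omega,H)$ together with H\"older's inequality shows that $p_N(\|\xi_1\|+\|\xi_2\|)\in L^1(\Omega)$. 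Hence $\varphi(\xi_1+\xi_2)$ is integrable and the conditional expectation on the right-hand side is meaningful; the same bound with $\xi_1$ replaced by the deterministic point $x$ shows that $u(x)=\E(\varphi(x+\xi_2))$ is finite for every $x\in H$.

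For the truncation, I would set $\psi_M(t)=\max(-M,\min(M,t))$ and $\varphi_M=\psi_M\circ\varphi$. Then $\varphi_M$ is bounded and measurable, $|\varphi_M(x)|\le |\varphi(x)|\le p_N(\|x\|)$, and $\varphi_M(x)\to\varphi(x)$ for every $x\in H$. Applying the bounded version of the lemma to each $\varphi_M$ gives, with $u_M(x)=\E(\varphi_M(x+\xi_2))$,
\begin{equation*}
u_M(\xi_1)=\E\bigl(\varphi_M(\xi_1+\xi_2)\,\big|\,\mathcal{G}\bigr)\quad\text{almost surely.}
\end{equation*}

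It then remains to let $M\to\infty$ on both sides. On the left, for each fixed $x$ the dominant $p_N(\|x\|+\|\xi_2\|)\in L^1(\Omega)$ lets dominated convergence give $u_M(x)\to u(x)$ for every $x\in H$; in particular $u$ is measurable as a pointwise limit of measurable functions and $u_M(\xi_1)\to u(\xi_1)$ pointwise on $\Omega$. On the right, $\varphi_M(\xi_1+\xi_2)\to\varphi(\xi_1+\xi_2)$ almost surely with the integrable dominant $p_N(\|\xi_1\|+\|\xi_2\|)$, so the conditional dominated convergence theorem yields $\E(\varphi_M(\xi_1+\xi_2)\,|\,\mathcal{G})\to\E(\varphi(\xi_1+\xi_2)\,|\,\mathcal{G})$ almost surely. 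Taking $M$ through the positive integers and discarding the countable union of the null sets on which the identities above fail, I can pass to the limit in the displayed equality to obtain $u(\xi_1)=\E(\varphi(\xi_1+\xi_2)\,|\,\mathcal{G})$ almost surely. The only delicate point is this interchange of limit and conditional expectation; it is precisely the uniform integrable dominant supplied by the polynomial growth of $\varphi$ and the $L^N$ integrability of $\xi_1,\xi_2$ that makes the conditional dominated convergence theorem applicable, so I do not expect any serious obstacle.
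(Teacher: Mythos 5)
Your proof is correct and takes essentially the same route as the paper: reduce to the bounded case covered by \cite[Proposition 1.12]{DPZ} via a truncation of $\varphi$, then pass to the limit on both sides of the identity using the polynomial dominant $p_N$ and dominated convergence (ordinary for $u_M(\xi_1)$, conditional for the right-hand side). The only cosmetic difference is the truncation itself: you cut off the \emph{values} of $\varphi$ (setting $\varphi_M=\psi_M\circ\varphi$), whereas the paper cuts off its \emph{argument} (setting $\varphi_n(x)=\varphi(x)$ for $\|x\|\le n$ and $\varphi_n(x)=\varphi(0)$ otherwise); both produce bounded measurable approximants dominated by $p_N(\|x\|)$, so the limiting argument is identical.
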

\begin{proof}
Define $\varphi_n(x)=\varphi(\xi_{B_n(0)}(x)x)$ where $\xi_{B_n(0)}$ is the characteristic function of the closed unit ball around $0$ with radius $n$.
We clearly have that $\varphi_n(x)\to \varphi(x)$ for all $x\in H$. Furthermore, $|\varphi_n(x)|\le p_N(\|x\|)$ for all $n\in \mathbb{N}$ and $x\in H$. Therefore, if $\eta\in L^N(\Omega, H)$, then by the dominated convergence theorem $\varphi_n(\eta)\to \varphi(\eta)$ in $L^1(\Omega,\mathbb{R})$. Let $x\in H$ and define $u(x):=\mathbf{E}(\varphi(x+\xi_2))$ and $u_n(x):=\mathbf{E}(\varphi_n(x+\xi_2))$. If we take $\eta:=x+\xi_2$, then, for all $x\in H$,
$$
|u_n(x)-u(x)|\le |\mathbf{E}(\varphi_n(\eta)-\varphi(\eta))|\le \|\varphi_n(\eta)-\varphi(\eta)\|_{L^1(\Omega,\mathbb{R})}\to 0
$$
as $n\to \infty$. We also have that
\begin{multline*}
|u_n(x)|\le \mathbf{E}|(\varphi_n(x+\xi_2))|\le \mathbf{E}( p_N(\|x+\xi_2\|))\\
\le C (p_N(\|x\|)+\mathbf{E}(p_N(\|\xi_2\|))\le C(p_N(\|x\|)+\|\xi_2\|_{L^N(\Omega,H)}),
\end{multline*}
and hence
$$
|u_n(\xi_1)|\le C(p_N(\|\xi_1\|)+\|\xi_2\|_{L^N(\Omega,H)})\in L^1(\Omega; \mathbb{R}).
$$
Therefore,
\begin{equation}\label{eq:coxi}
u_n(\xi_1)\to u(\xi_1) \text{ in }L^1(\Omega,\mathbb{R})
\end{equation}
as $n\to \infty$ by dominated convergence. Since $\phi_n$
is a bounded and measurable function it follows from \cite[Proposition 1.12]{DPZ} that $u_n(\xi_1)=\mathbf{E}(\varphi(\xi_1+\xi_2)|\mathcal{G})$. By taking $\eta=\xi_1+\xi_2$ it follows as above that  $\varphi_n(\xi_1+\xi_2)\to \varphi(\xi_1+\xi_2)$ in $L^1(\Omega, \mathbb{R})$ and thus
by the dominated convergence theorem for conditional expectations we conclude that $$u_n(\xi_1)=\mathbf{E}(\varphi_n(\xi_1+\xi_2)|\mathcal{G})\to \mathbf{E}(\varphi(\xi_1+\xi_2)|\mathcal{G})\text{ in }L^1(\Omega, H)$$ as $n\to \infty$  which finishes the proof in view of \eqref{eq:coxi}.
\end{proof}

For any $x\in H$ and $t\in [0,T]$, we define
$$
Z(T,t,x):=x+\int_t^TS(T-s)\,\dd W^Q(s).
$$
 The above stochastic integral makes sense since $S Q^{1/2} \in L^2((0,T),\HS)$ (see Remark \ref{rem:sq}). Let $\varphi$  satisfy \eqref{eq:phi0} or \eqref{eq:phi1}, and define
\begin{equation} \label{eq:defu}
u(x,t):=\mathbf{E}(\varphi(Z(T,t,x))),~x\in H,~t\in [0,T].
%&\partial_x u(x,t)= \mathbf{E}(D\varphi(Z(T,t,x))),~x\in H, t\in [0,T],\\
%&\partial_{xx} u(x,t) = \mathbf{E}(D^2\varphi(Z(T,t,x))),~x\in H,~t\in [0,T].
\end{equation}
Since $\mathrm{Tr}(S(T-\cdot)QS(T-\cdot)^*) \in L^1(0,T)$ (see Remark \ref{rem:sq}) and $D^2\varphi \in C_b(H,{\mathcal B}(H))$, it
is well known that $u$ is a solution of the following backward Kolmogorov equation
\begin{equation}\label{eq:kolmogorov}
u_t(x,t) + \frac 12\, \mathrm{Tr}\Big (u_{xx}(x,t) S(T-t)Q S(T-t)^*\Big) = 0, \quad x\in H, \; t\in [0,T),
\end{equation}
\noindent with the terminal condition $u(T,x) = \varphi(x)$, $x\in H$.

\begin{corollary}\label{cor:u}
Let $\xi$ be $\mathcal{F}_t$-measurable where $\{\mathcal{F}_t\}_{t\ge 0}$ is the normal filtration generated by $W$.
Let  $\varphi$ satisfy \eqref{eq:phi0} and $\xi\in L^2(\Omega,H)$ or let $\varphi$ satisfy \eqref{eq:phi1} and $\xi\in L^1(\Omega,H)$.
Let $u$ defined by (\ref{eq:defu}). Then
\begin{equation*}
u(\xi,t)=\mathbf{E}(\varphi(Z(T,t,\xi))|\mathcal{F}_t),~t\in [0,T].\\
%&(\partial_xu(\xi,t),v)=(\mathbf{E}(D\varphi(Z(T,t,\xi))|\mathcal{F}_t),v)~t\in [0,T],~v\in H,\\
%&(\partial_{xx}u(\xi,t)u,v)=(\mathbf{E}(D^2\varphi(Z(T,t,\xi))|\mathcal{F}_t)u,v)~t\in [0,T], ~u,v\in H.
\end{equation*}
\end{corollary}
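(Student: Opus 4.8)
The plan is to recognize the statement as a direct application of Lemma \ref{lem:ind}. Fix $t\in[0,T]$ and set $\mathcal{G}:=\mathcal{F}_t$, $\xi_1:=\xi$, and $\xi_2:=\int_t^T S(T-s)\,\dd W^Q(s)$. With these choices $Z(T,t,\xi)=\xi+\xi_2$, and the function $u(\cdot,t)$ from \eqref{eq:defu} is precisely the function $x\mapsto\E(\varphi(x+\xi_2))$ appearing in Lemma \ref{lem:ind}. It therefore suffices to check the three hypotheses of that lemma: $\xi_1$ is $\mathcal{G}$-measurable (immediate, since $\xi$ is $\mathcal{F}_t$-measurable by assumption), $\xi_2$ is independent of $\mathcal{G}$, and $\varphi$ obeys a polynomial growth bound with $\xi_1,\xi_2\in L^N(\Omega,H)$ for the corresponding degree $N$.

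First I would establish the independence of $\xi_2$ from $\mathcal{F}_t$. Since $W^Q$ has independent increments, the It\^o integral over $[t,T]$ is measurable with respect to $\sigma(W^Q(r)-W^Q(t):r\in[t,T])$, which is independent of $\mathcal{F}_t$; this is the one genuinely probabilistic point and I regard it as the main (if mild) obstacle. The integrability $\xi_2\in L^p(\Omega,H)$ for every $p\ge1$ is then automatic, because $\xi_2$ is a centered Gaussian $H$-valued variable with $\E\|\xi_2\|^2=\int_t^T\|S(T-s)Q^{1/2}\|_{\HS}^2\,\dd s<\infty$ by Remark \ref{rem:sq}.

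Next I would derive the growth bound on $\varphi$ by integrating the bounded derivatives along the segment from $0$ to $x$. Under \eqref{eq:phi1}, $D\varphi\in C_b$ gives $|\varphi(x)|\le|\varphi(0)|+\|D\varphi\|_\infty\|x\|$, so $N=1$ and the hypothesis $\xi\in L^1(\Omega,H)$ is exactly what is needed. Under \eqref{eq:phi0}, $D^2\varphi\in C_b$ first yields the linear bound $\|D\varphi(x)\|\le\|D\varphi(0)\|+\|D^2\varphi\|_\infty\|x\|$ and then the quadratic bound $|\varphi(x)|\le|\varphi(0)|+\|D\varphi(0)\|\,\|x\|+\frac{1}{2}\|D^2\varphi\|_\infty\|x\|^2$, so $N=2$ and the hypothesis $\xi\in L^2(\Omega,H)$ suffices; in both cases the bounding polynomial has nonnegative coefficients, as required.

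Having verified all hypotheses, Lemma \ref{lem:ind} gives $u(\xi,t)=\E(\varphi(\xi+\xi_2)\mid\mathcal{F}_t)=\E(\varphi(Z(T,t,\xi))\mid\mathcal{F}_t)$ almost surely, which is the assertion. The heavy lifting---accommodating an unbounded $\varphi$ with possibly unbounded first derivative together with a random, merely $L^N$-integrable $\xi_1$---has already been absorbed into Lemma \ref{lem:ind}, so the corollary reduces to the verification above.
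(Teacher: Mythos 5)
Your proof is correct and takes essentially the same route as the paper: the paper's proof likewise invokes Lemma \ref{lem:ind} with $\xi_1=\xi$ and $\xi_2=\int_t^T S(T-s)\,\dd W^Q(s)$, checking $\xi_2\in L^2(\Omega,H)\subset L^1(\Omega,H)$ via It\^o's isometry. Your additional verifications---the independence of $\xi_2$ from $\mathcal{F}_t$ and the polynomial growth bounds on $\varphi$ under \eqref{eq:phi0} and \eqref{eq:phi1} (degree $N=2$ resp. $N=1$, matching the integrability hypotheses on $\xi$)---simply make explicit what the paper leaves implicit.
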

\begin{proof}
The statement follow from Lemma \ref{lem:ind} with $\xi_1=\xi$ and $\xi_2=\int_t^TS(T-s)\,\dd W^Q(s)$ noting that
$\xi_2\in L^2(\Omega,H)\subset L^1(\Omega,H)$ as, by It\^o's Isometry,
$$
\mathbf{E}\|\xi_2\|^2=\int_t^T\|S(T-s)Q^{\frac{1}{2}}\|^2_{\HS}\,\dd s\le \int_0^T\|S(t)Q^{\frac{1}{2}}\|^2_{\HS}\,\dd t<\infty.
$$
\end{proof}
We quote the following It\^o's formula from \cite{Brez}.
\begin{proposition}[It\^o's formula]\label{prop:ito}
Let $f:[c,d)\times H\to\mathbb{R}$, $0\le c<d\le \infty,$ such that $f,\partial_t f, \partial_x f$ and $\partial^2_{xx} f$ are continuous on $[c,d)\times H$ with values in the appropriate spaces. Let $a\in L^1_{loc}(\Omega\times (c,d);H)$ and $\xi Q^{1/2}\in L^2_{loc}(\Omega\times(c,d),\HS)$ and
$$
X(t)=X(c)+\int_c^ta(s)\,\dd s+\int_c^t \xi(s)\,\dd W^Q(s),~ t\in [c,d).
$$
Then, for all $t\in [c,d)$, almost surely,
\begin{align*}
&f(t,X(t))-f(c,X(c))=\int_c^t \partial_t f(s,X(s))\,\dd s + \int_c^t(\partial_x f(s,X(s)), a(s))\,\dd s\\
&+\int_c^t(\partial_x f(s,X(s)), \xi(s)\,\dd W^Q(s))+\frac{1}{2}\int_c^t \Tr(\partial^2_{xx}f(s,X(s))\xi(s) Q\xi^*(s))\,\dd s.
\end{align*}
\end{proposition}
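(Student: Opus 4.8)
The statement is the standard infinite-dimensional It\^o formula, so the plan is to reduce it to the classical finite-dimensional It\^o formula by a Galerkin projection and then pass to the limit. First I would \emph{localize}. Since $a\in L^1_{loc}(\Omega\times(c,d);H)$ and $\xi Q^{1/2}\in L^2_{loc}(\Omega\times(c,d),\HS)$, I would fix $c<t^\ast<d$ and introduce the stopping times
$$
\tau_n=\inf\Big\{t\in(c,t^\ast]:\ \int_c^t\norm{a(s)}\,\dd s+\int_c^t\norm{\xi(s)Q^{1/2}}_{\HS}^2\,\dd s\ge n\Big\}\wedge t^\ast,
$$
prove the identity on $[c,t\wedge\tau_n]$, and let $n\to\infty$ using $\tau_n\uparrow t^\ast$ and the path-continuity of $X$ (which itself follows from continuity of $t\mapsto\int_c^t a\,\dd s$ and of a modification of the stochastic integral). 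On each such interval the coefficients obey global integrability bounds; moreover, for a.e.\ fixed $\omega$ the path $s\mapsto X(s)$ is continuous on a \emph{compact} time interval, hence its range is a compact subset of $H$, on which the continuous maps $\partial_t f,\partial_x f,\partial^2_{xx}f$ are bounded. This uniform control is what will license the dominated-convergence arguments below.

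Next I would use the spectral basis $\{e_k\}$ of \eqref{eq:spectral}. Let $P_m$ be the orthogonal projection onto $\mathrm{span}\{e_1,\dots,e_m\}$ and set $X_m=P_mX$, which solves the finite-dimensional equation $X_m(t)=P_mX(c)+\int_c^tP_ma(s)\,\dd s+\int_c^tP_m\xi(s)\,\dd W^Q(s)$ in $P_mH\cong\R^m$. Applying the classical It\^o formula to $f$ along $X_m$ yields the asserted identity with $a,\xi$ replaced by $P_ma,P_m\xi$ and with trace term $\tfrac12\int_c^t\Tr\big(\partial^2_{xx}f(s,X_m(s))\,P_m\xi(s)Q\xi^*(s)P_m\big)\,\dd s$. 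It then remains to send $m\to\infty$ in every term.

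The value terms and the two Lebesgue integrals pass to the limit by dominated convergence, since $P_mX(s)\to X(s)$ in $H$, the derivatives are continuous and bounded along the compact path, and $\norm{P_ma(s)}\le\norm{a(s)}\in L^1$. The real stochastic integral converges in $L^2(\Omega)$ by It\^o's isometry, $\E\big|\!\int_c^t(g,\xi\,\dd W^Q)\big|^2=\E\int_c^t\norm{(\xi Q^{1/2})^*g(s)}^2\,\dd s$, together with \eqref{eq:bdhs}: as $P_m\to I$ strongly and $\partial_x f$ is continuous, the integrand tends to $0$ pointwise and is dominated using the localization bounds. The genuinely delicate point, and the one I expect to be the main obstacle, is the \emph{trace term}. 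Writing $B=\xi Q^{1/2}$ so that $\xi Q\xi^*=BB^*$ and $P_m\xi Q\xi^*P_m=(P_mB)(P_mB)^*$, and using the splitting $(P_mB)(P_mB)^*-BB^*=(P_mB)(P_mB-B)^*+(P_mB-B)B^*$ together with \eqref{eq:trhs} and \eqref{eq:bdhs}, I would estimate
\begin{align*}
&\big|\Tr\big(\partial^2_{xx}f\,[(P_mB)(P_mB)^*-BB^*]\big)\big|\\
&\qquad\le 2\,\big\|\partial^2_{xx}f\big\|_{\mathcal B(H)}\,\norm{B}_{\HS}\,\norm{P_mB-B}_{\HS}.
\end{align*}
Since $B(s)=\xi(s)Q^{1/2}\in\HS$ for a.e.\ $s$ one has $\norm{P_mB(s)-B(s)}_{\HS}\to0$, while $\norm{B(s)}_{\HS}^2\in L^1$ by the localization, so dominated convergence applies; combined with the norm-continuity of $\partial^2_{xx}f$ along the path, which handles the replacement of $\partial^2_{xx}f(s,X_m(s))$ by $\partial^2_{xx}f(s,X(s))$, the trace term converges. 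Collecting all the limits gives the claimed formula on $[c,t\wedge\tau_n]$, and letting $n\to\infty$ removes the localization and completes the proof.
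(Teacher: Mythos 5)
The paper offers no proof of this proposition at all: it is quoted verbatim from Brze\'zniak \cite{Brez}, presumably precisely because under these minimal hypotheses (the derivatives of $f$ are only assumed continuous, not bounded or uniformly continuous on bounded sets as in, e.g., Da Prato--Zabczyk) the textbook arguments require care. So your proof is necessarily a different route; the Galerkin-plus-localization scheme you follow is the classical one, and most of your limit passages are correct as \emph{pathwise} arguments: the value terms and the two Lebesgue integrals converge a.s.\ by dominated convergence, provided you replace ``bounded along the compact path'' by boundedness on the compact set $K\cup\bigcup_m P_mK$, where $K=\{X(s,\omega):c\le s\le t\}$ (this union is indeed compact, since $P_m\to I$ uniformly on compact sets); the same remark applies to your trace-term argument, whose Hilbert--Schmidt splitting and use of \eqref{eq:trhs} and \eqref{eq:bdhs} are correct and which only needs a.s.\ convergence of a Lebesgue integral.

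The genuine gap is the stochastic-integral term. You assert $L^2(\Omega)$ convergence via the It\^o isometry with the integrand ``dominated using the localization bounds'', but your stopping times $\tau_n$ control only $\int_c^{t}\|a\|\,\dd s+\int_c^{t}\|\xi Q^{1/2}\|^2_{\HS}\,\dd s$. Writing $g_m(s)=\partial_xf(s,X_m(s))$ and $g(s)=\partial_xf(s,X(s))$, the isometry integrand is $\|(\xi Q^{1/2})^*(P_mg_m-g)\|^2\le \|\xi Q^{1/2}\|^2_{\HS}\,M(\omega)$, where $M(\omega)=\sup_m\sup_{c\le s\le t}\bigl(\|g_m(s)\|+\|g(s)\|\bigr)^2$ is a.s.\ finite by compactness but is a \emph{random} constant: nothing in the hypotheses or in your localization gives $\E[M]<\infty$, so dominated convergence under $\E\int$ does not apply. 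Nor can you repair this by additionally stopping when $\|X\|$ exceeds $n$: balls in $H$ are not compact and $\partial_xf$ is merely continuous, hence possibly unbounded on bounded sets. Two standard fixes: (i) build the running supremum $\sup_m\sup_{c\le s\le t}\|\partial_xf(s,P_mX(s))\|$ into the definition of the stopping times (it is adapted, nondecreasing and a.s.\ finite, and its level-crossing times are stopping times under the usual conditions, which hold since the filtration is normal), after which the $L^2$ argument goes through; or (ii) abandon $L^2$ convergence: your pathwise compactness argument already gives $\int_c^t\|(\xi Q^{1/2})^*(P_mg_m-g)\|^2\,\dd s\to0$ a.s., and this implies convergence of the stochastic integrals in probability (Lenglart's inequality / continuity in probability of the stochastic integral), which identifies the limit a.s.\ and suffices for the identity. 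With either repair your proof is complete.
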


%{\sf EXEMPLES DE TELLES FONCTIONS $\varphi$ NON BORN\'EES}
The proof of the main approximation result of the paper relies on the ability to compare the laws of two different It\^{o} processes of the form
$$
Y(t):=Y(0)+\int_{0}^tS(T-s)\,\dd W^Q(s)
$$
and
$$
\T{Y}(t):=\T{Y}(0)+\int_{0}^t\T{S}(T-s)\,\dd W^Q(s),
$$
%Define the operators
%$$
%Q_tx=\int_0^tR(s)QR^*(s)x\,\dd s\text{ and }\tilde{Q}_tx=\int_0^t\tilde{R}(s)Q\tilde{R}^*(s)x\,\dd s.
%$$
\noindent where $\{\tilde S(t)\}_{t>0}$ denotes another family of bounded operators on $H$ such that $\tilde SQ^{1/2} \in L^2((0,T),\HS)$.
We have the following general error formula for
\begin{equation}\label{eq:weakErr}
e(T)={\bf E}\left(\varphi(\T{Y}(T))-\varphi({Y(T)})\right).
\end{equation}
\begin{theorem}\label{main}
 If $\varphi$ satisfies \eqref{eq:phi0} and $Y(0),\T{Y}(0)\in L^2(\Omega,H)$ or  $\varphi$ satisfies \eqref{eq:phi1} and $Y(0),\T{Y}(0)\in L^1(\Omega,H)$ and $SQ^{1/2},\tilde{S}Q^{1/2}\in L^{2}((0,T),\HS)$, then $Y$ and $\tilde{Y}$ are well-defined and the weak error $e(T)$ in
\eqref{eq:weakErr} has the representation
\begin{equation*}
e(T) ={\bf E}\big(u(\tilde{Y}(0),0)-u(Y(0),0)\big)
+\tfrac{1}{2}{\bf E}\int_0^T
\Tr\Big( u_{xx}(\tilde{Y}(t),t)\mathcal{O}(t)\Big)\,\dd t,
\end{equation*}
where
\begin{equation*}
\mathcal{O}(t)=\big(\tilde{S}(T-t)+S(T-t)\big)Q\big(\tilde{S}(T-t)-S(T-t)\big)^*,
\end{equation*}
or
\begin{equation*}
\mathcal{O}(t)=\big(\tilde{S}(T-t)-S(T-t)\big)Q\big(\tilde{S}(T-t)+S(T-t)\big)^*.
\end{equation*}
\end{theorem}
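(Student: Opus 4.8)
The plan is to exploit that $u(\cdot,T)=\varphi$ by the terminal condition and that $u$ solves the backward Kolmogorov equation \eqref{eq:kolmogorov} built from the \emph{continuous} family $S$, and then to feed the two processes $Y$ and $\T Y$ into $u$ via It\^o's formula. Writing $e(T)=\E\, u(\T Y(T),T)-\E\, u(Y(T),T)$, I would treat the two terms separately; the asymmetry between them is exactly the source of $\mathcal O(t)$. For the $Y$-term, note that $Y(t)+\int_t^TS(T-s)\,\dd W^Q(s)=Y(T)$, so $Z(T,t,Y(t))=Y(T)$ and Corollary \ref{cor:u} gives $u(Y(t),t)=\E(\varphi(Y(T))\mid\mathcal F_t)$; in particular $\E\, u(Y(T),T)=\E\, u(Y(0),0)$. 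Equivalently, applying It\^o's formula (Proposition \ref{prop:ito}) to $f(t,x)=u(x,t)$ along $Y$ (drift $a\equiv 0$, diffusion $\xi(s)=S(T-s)$), the drift and second-order terms combine into $u_t+\tfrac12\Tr(u_{xx}\,S(T-t)QS(T-t)^*)$, which vanishes identically by \eqref{eq:kolmogorov}, leaving only a stochastic integral of zero expectation.

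For the $\T Y$-term I would apply the same It\^o's formula to $u(\T Y(t),t)$, now with $\xi(s)=\T S(T-s)$. Here the second-order contribution is $\tfrac12\Tr(u_{xx}\,\T S(T-t)Q\T S(T-t)^*)$, whereas $u_t=-\tfrac12\Tr(u_{xx}\,S(T-t)QS(T-t)^*)$ still carries the \emph{continuous} covariance, so these no longer cancel. After taking expectations, which kills the martingale part, I obtain
\begin{equation*}
\E\, u(\T Y(T),T)-\E\, u(\T Y(0),0)=\tfrac12\,\E\int_0^T\Tr\!\Big(u_{xx}(\T Y(t),t)\big(\T S(T-t)Q\T S(T-t)^*-S(T-t)QS(T-t)^*\big)\Big)\,\dd t.
\end{equation*}
Subtracting the $Y$-identity yields the asserted formula with the bracket $\T S Q\T S^*-SQS^*$ in place of $\mathcal O(t)$.

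To pass from this bracket to $\mathcal O(t)$ I use the polarization identity $\T S Q\T S^*-SQS^*=\tfrac12\big[(\T S+S)Q(\T S-S)^*+(\T S-S)Q(\T S+S)^*\big]$, valid since $Q=Q^*$. The two summands are adjoints of one another (again because $Q$ is self-adjoint), and since $u_{xx}(\cdot,t)$ is self-adjoint and the trace is invariant under adjoints on a real Hilbert space, $\Tr(u_{xx}(\T S+S)Q(\T S-S)^*)=\Tr(u_{xx}(\T S-S)Q(\T S+S)^*)$. Hence each summand reproduces the full trace, which is precisely why either choice of $\mathcal O(t)$ is admissible.

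The main obstacle is the justification of these manipulations rather than the algebra. One must verify the hypotheses of Proposition \ref{prop:ito}: the regularity of $u,u_t,u_x,u_{xx}$ (available since $u$ solves \eqref{eq:kolmogorov}), the condition $\T S Q^{1/2},\,SQ^{1/2}\in L^2((0,T),\HS)$ (assumed), and that the stochastic integrals really have vanishing expectation. The last point is delicate under \eqref{eq:phi0}, where $D\varphi$, and hence $u_x$, may be unbounded; since $D^2\varphi$ is bounded, $u$ grows at most quadratically and $u_x$ at most linearly, so together with the finite second moments of the Gaussian processes $Y,\T Y$ one controls the integrands, if necessary after localizing by stopping times, applying Proposition \ref{prop:ito} on $[0,\tau]$ with $\tau<T$, and then letting $\tau\uparrow T$ using continuity of $u$ and dominated convergence to handle the degeneration of the covariance at $t=T$. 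Finiteness of the final trace integral follows from boundedness of $u_{xx}$ together with \eqref{eq:trhs} and the $L^2((0,T),\HS)$ assumptions.
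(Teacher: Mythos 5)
Your proposal is correct and takes essentially the same route as the paper: the paper proves Theorem \ref{main} by invoking the semigroup-case arguments of \cite[Theorem 3.1]{KLLweak} and \cite[Theorem 3.1]{fullweak}, which proceed exactly as you do --- Corollary \ref{cor:u} to identify $\mathbf{E}\,\varphi(Y(T))$ with $\mathbf{E}\,u(Y(0),0)$, It\^o's formula (Proposition \ref{prop:ito}) combined with the Kolmogorov equation \eqref{eq:kolmogorov} applied along $\T{Y}$ to produce the covariance-difference term, and then the polarization identity plus self-adjointness of $u_{xx}$ and $Q$ to recast $\T{S}Q\T{S}^*-SQS^*$ as either form of $\mathcal{O}(t)$. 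Your treatment of the technical points (linear growth of $u_x$ under \eqref{eq:phi0}, vanishing expectation of the stochastic integral, and the limit $\tau\uparrow T$) supplies precisely the care those cited proofs require.
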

\begin{proof}
The proof is analogous to the semigroup case in \cite[Theorem 3.1]{KLLweak} and \cite[Theorem 3.1]{fullweak} using Proposition \ref{prop:ito}, Corollary \ref{cor:u} and the Kolmogorov's equation (\ref{eq:kolmogorov}).
\end{proof}

\section{The convergence result}
In this section we apply Theorem \ref{main} to the approximation scheme \eqref{eq:full_scheme}. In order to do so we set
$$
Y(t)=S(T)X_0+\int_0^tS(T-s)\,\dd W^Q(s)
$$
and
$$
\tilde{Y}(t)=\tilde{B}_{h,N}(T)X_0+\int_0^t\tilde{B}_{h,N}(T-s)\,\dd W^Q(s).
$$
Note that $Y(T)=X(T)$ and $\tilde{Y}(T)=X^N_h$.
Our main result is stated below.
\begin{theorem}\label{theo:weak}
Let $T>0$, $N\geq 1$ an integer and $\Delta t = T/N$. For any $h>0$, let $\{X^n_h\}_{0\leq n \leq N}$ be defined by \eqref{eq:full_scheme} and let $\{X(t)\}_{t\in[0,T]}$ be the unique weak solution \eqref{eq:vcf} of \eqref{eq:stovolterra}. Let $\varphi$ satisfy \eqref{eq:phi0} and suppose that $X_0\in L^2(\Omega,H)$ or let $\varphi$ satisfy \eqref{eq:phi1} and suppose $X_0\in L^1(\Omega,H)$. If $\|A^{\frac{\nu-1/\rho}{2}}Q^\frac12\|^2_{\HS}<\infty$, $0\le \nu\le 1/\rho$, then there exists a constant $C = C(T,\nu,\varphi,X_0)>0$ which does not depend on $h$ and $N$ such that for $h^{2/\rho}+\Delta t<T$,
\begin{equation}\label{eq:weakorder}
|\E \, \varphi(X^N_h) - \E \, \varphi(X(T))| \leq C\ln\left(\frac{T}{h^{2/\rho}+\Delta t}\right) (\Delta t^{\rho\nu} + h^{2\nu} ).
\end{equation}
\end{theorem}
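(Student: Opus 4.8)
The plan is to apply the error representation of Theorem \ref{main} with the operator families already fixed in this section, namely $S$ and $\tilde S=\tilde B_{h,N}$, for which $Y(T)=X(T)$ and $\tilde Y(T)=X^N_h$, so that $e(T)=\E\varphi(X^N_h)-\E\varphi(X(T))$. Its hypotheses hold: Remark \ref{rem:sq}, \eqref{dsmooth} and the assumption on $Q$ give $SQ^{1/2},\tilde B_{h,N}Q^{1/2}\in L^2((0,T),\HS)$, while the integrability and measurability needed for random initial data and unbounded $\varphi$ are supplied by Lemma \ref{lem:ind} and Corollary \ref{cor:u}. This decomposes $e(T)$ into an initial-data term $\E\big(u(\tilde Y(0),0)-u(Y(0),0)\big)$ and a trace term $\tfrac12\,\E\int_0^T\Tr\big(u_{xx}(\tilde Y(t),t)\mathcal{O}(t)\big)\,\dd t$.

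For the initial-data term, since $u(x,0)=\E\varphi(x+\xi)$ with $\xi=\int_0^TS(T-s)\,\dd W^Q(s)$, boundedness of $D^2\varphi$ yields $\|D_xu(x,0)\|\le C(1+\|x\|)$; the mean value theorem and Cauchy--Schwarz (using $X_0\in L^2(\Omega,H)$, or $X_0\in L^1$ with $D\varphi$ bounded under \eqref{eq:phi1}) then bound this term by $C\|E_{h,N}(T)\|_{\mathcal B(H)}$, which by \eqref{err} at the fixed time $T$ is $\le C(\Delta t^{\rho\nu}+h^{2\nu})$, with no logarithm. For the trace term I write $r=T-t$, $E=E_{h,N}(r)$ and split $Q=Q^{1/2}Q^{1/2}$; cyclicity of the trace together with \eqref{eq:trhs}, \eqref{eq:bdhs} and $\|u_{xx}\|_{\mathcal B(H)}\le\|D^2\varphi\|_{C_b}$ gives
$$|\Tr(u_{xx}(\tilde Y(t),t)\mathcal O(t))|\le C\,\|(\tilde S(r)+S(r))Q^{1/2}\|_{\HS}\,\|EQ^{1/2}\|_{\HS}.$$
Because $S(r)$, $\tilde B_{h,N}(r)$, and hence $E$, are self-adjoint, I insert $A^{(\nu-1/\rho)/2}A^{(1/\rho-\nu)/2}$ and use $\|A^{(\nu-1/\rho)/2}Q^{1/2}\|_{\HS}<\infty$ to transfer the singular $A$-power onto the operators: \eqref{dsmooth} and Proposition \ref{prop:sm}(i) with parameter $1/\rho-\nu$ give $\|(\tilde S(r)+S(r))Q^{1/2}\|_{\HS}\le Cr^{-(1-\rho\nu)/2}$, while \eqref{serr} gives $\|EQ^{1/2}\|_{\HS}\le Cr^{-1/2-\rho\nu/2}(\Delta t^{\rho\nu}+h^{2\nu})$.

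Multiplying these, the trace integrand is bounded by $Cr^{-1}(\Delta t^{\rho\nu}+h^{2\nu})$, and the main obstacle is precisely this borderline, logarithmically non-integrable $r^{-1}$ singularity at $r=0$. The device that resolves it is to split the integral at the discretization scale $\tau:=h^{2/\rho}+\Delta t$. On the far region $r\ge\tau$ I keep the difference estimate above, and $\int_\tau^Tr^{-1}\,\dd r=\ln(T/\tau)$ produces exactly the claimed logarithm times $(\Delta t^{\rho\nu}+h^{2\nu})$. On the near region $r<\tau$ I discard the convergence factor and bound each operator separately via $\|EQ^{1/2}\|_{\HS}\le\|\tilde S(r)Q^{1/2}\|_{\HS}+\|S(r)Q^{1/2}\|_{\HS}$, so that the integrand is controlled by $Cr^{-(1-\rho\nu)}$, which is integrable, with $\int_0^\tau r^{-(1-\rho\nu)}\,\dd r\le C\tau^{\rho\nu}$. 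The concluding observation is that $\tau^{\rho\nu}=(h^{2/\rho}+\Delta t)^{\rho\nu}\le h^{2\nu}+\Delta t^{\rho\nu}$ by subadditivity of $s\mapsto s^{\rho\nu}$, valid since $\rho\nu\le1$, so the near-region contribution is again $\le C(\Delta t^{\rho\nu}+h^{2\nu})$. Collecting the initial-data, far- and near-region pieces yields the stated bound $C\ln(T/\tau)(\Delta t^{\rho\nu}+h^{2\nu})$, the non-logarithmic terms being absorbed in the regime $\tau\to0$ where $\ln(T/\tau)\ge1$ (the complementary case $\tau\ge T/\mathrm e$ corresponds to a bounded number of steps and is trivial).
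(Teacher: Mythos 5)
Your proposal is correct and follows essentially the same route as the paper's own proof: the error representation of Theorem \ref{main} with $\tilde S=\tilde B_{h,N}$, the transfer of the powers $A^{\pm(1/\rho-\nu)/2}$ onto the resolvent factors via self-adjointness and the Hilbert--Schmidt assumption on $A^{(\nu-1/\rho)/2}Q^{1/2}$, the splitting of the trace integral at $h^{2/\rho}+\Delta t$ (near region controlled by the smoothing estimates \eqref{dsmooth} and Proposition \ref{prop:sm}(i) plus subadditivity of $s\mapsto s^{\rho\nu}$, far region by \eqref{serr} producing the logarithm), and the initial-data term via \eqref{err} at $t=T$. The only cosmetic deviations are that you bound the initial term by the mean value theorem applied to $u(\cdot,0)$ instead of Taylor's formula applied to $\varphi$ under a conditional expectation (equivalent, given Corollary \ref{cor:u}), and that your closing parenthetical on the regime $h^{2/\rho}+\Delta t\ge T/\mathrm{e}$ addresses an edge case the paper passes over in silence.
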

\begin{proof}
We use Theorem \ref{main} with $Y(0)=S(T)X_0$, $\tilde{Y}(0)=\tilde{B}_{h,N}(T)X_0$, and $\tilde{S}(t)=\tilde{B}_{h,N}(t)$.
%Using (\ref{eq:smooth_discrete}) with $\nu =1/\rho$, the fact that the operators $A$, $A_h$ are self-adjoint and the inequality (\ref{eq:bdhs}), we get
%\begin{align*}
%&\int_0^T \| \tilde B_{h,N}(t) Q^{1/2}\|_{\HS}^2 \, \dd t \leq C_2 \int_0^T t^{-1+2\ep\rho} \|A_h^{-1/(2\rho)+\ep} P_h Q^{1/2}\|^2_{\HS}\, \dd t \\
%& \quad\quad\leq  C(T) \|A_h^{-1/(2\rho)+\ep} P_h A^{1/(2\rho)-\ep}\|_{{\mathcal B}(H)} \| A^{\ep - 1/(2\rho)} Q^{1/2} \|^2_{\HS}.
%\end{align*}
%Since $1/(2\rho) < 1/2$, it follows that $\|A_h^{-1/(2\rho)} P_h A^{1/(2\rho)}\|_{{\mathcal B}(H)}\leq 1$  and hence that  $\tilde{S}Q^{1/2}\in L^2((0,T),HS)$.
Using \eqref{eq:bdhs} and \eqref{dsmooth} with $\nu=0$, we have that $$\|\tilde{B}_{h,N}(t)\|_{\HS}=\|P_h\tilde{B}_{h,N}(t)\|_{\HS}\le C\|P_h\|_{\HS},~t\in (0,T).$$ Therefore, as $V_h$ is finite dimensional, it follows that $\tilde{S}Q^{1/2}\in L^2((0,T), \HS)$.
Furthermore,
$$
\int_0^T\|S(t)Q^{\frac12}\|^2_{\HS}\,\dd t\le C \|A^{-\frac{1}{2\rho}}Q^\frac12\|^2_{\HS}\le C \|A^{-\frac{\nu}{2}}\|_{\mathcal{B}(H)}\|A^{\frac{\nu-1/\rho}{2}}Q^\frac12\|^2_{\HS}<\infty,
$$
where the first inequality is shown in the proof of \cite[Theorem 3.6]{kovacsprintems} and the second inequality follows from \eqref{eq:bdhs}.
Thus, Theorem \ref{main} is applicable.

We estimate the trace term first. Using that the operators $A,\tilde{B}_{h,N}$, and $S$ are self-adjoint, and taking inequalities \eqref{eq:trhs} and \eqref{eq:bdhs} into account, we have
\begin{align}
&\Big|{\bf E}\int_0^T\Tr\Big(u_{xx}(\tilde Y(t),t) \nonumber\\
&\qquad\times [\tilde{B}_{h,N}(T-t)+S(T-t)]Q[\tilde{B}_{h,N}(T-t)-S(T-t)]^*\Big)\,\dd t\Big|\nonumber\\
&\quad=\Big|{\bf E}\int_0^T\Tr\Big(u_{xx}(\tilde Y(t),t)[\tilde{B}_{h,N}(T-t)+S(T-t)]^*\nonumber\\
& \qquad \times A^{\frac{1/\rho-\nu}{2}}A^{\frac{\nu-1/\rho}{2}}Q^\frac12 Q^{\frac12}A^{\frac{\nu-1/\rho}{2}}A^{\frac{1/\rho-\nu}{2}}E_{h,N}(T-t)\Big)\,\dd t\Big|\nonumber\\
&\quad =\Big|{\bf E}\int_0^T\Tr\Big(u_{xx}(\tilde Y(t),t)(A^{\frac{1/\rho-\nu}{2}}[\tilde{B}_{h,N}(T-t)+S(T-t)])^*\nonumber\\
&\qquad\times A^{\frac{\nu-1/\rho}{2}}Q^\frac12 Q^{\frac12}A^{\frac{\nu-1/\rho}{2}}A^{\frac{1/\rho-\nu}{2}}E_{h,N}(T-t)\Big)\,\dd t\Big|\nonumber\\
&\quad\le {\mathbf E}\int_0^T \|u_{xx}(\tilde Y(t),t)(A^{\frac{1/\rho-\nu}{2}}[\tilde{B}_{h,N}(T-t)+S(T-t)])^*A^{\frac{\nu-1/\rho}{2}}Q^\frac12\|_{\HS}\nonumber\\
&\qquad\times\|Q^{\frac12}A^{\frac{\nu-1/\rho}{2}}A^{\frac{1/\rho-\nu}{2}}E_{h,N}(T-t)\|_{\HS}\,\dd t\nonumber\\
&\quad \le \sup_{(x,t)\in H\times [0,T]}\|u_{xx}(x,t)\|_{\mathcal{B}(H)}\|A^{\frac{\nu-1/\rho}{2}}Q^\frac12\|^2_{\HS}\\
&\qquad\times\int_0^T\|A^{\frac{1/\rho-\nu}{2}}(\tilde{B}_{h,N}(t)+S(t))\|_{\mathcal{B}(H)}
\|A^{\frac{1/\rho-\nu}{2}}E_{h,N}(t)\|_{\mathcal{B}(H)}\,\dd t .
\end{align}
Next we split the integral from $0$ to $\Delta t + h^{2/\rho}$ and from $h^{2/\rho}+\Delta t$ to $T$. Then, using Proposition \ref{prop:sm} (i) and \eqref{dsmooth},
\begin{align*}
&\int_0^{\Delta t + h^{2/\rho}}\|A^{\frac{1/\rho-\nu}{2}}(\tilde{B}_{h,N}(t)+S(t))\|_{\mathcal{B}(H)}
\|A^{\frac{1/\rho-\nu}{2}}E_{h,N}(t)\|_{\mathcal{B}(H)}\,\dd t \\
&\quad\le 2
 \int_0^{\Delta t + h^{2/\rho}}\left(\|A^{\frac{1/\rho-\nu}{2}}\tilde{B}_{h,N}(t)\|^2_{\mathcal{B}(H)}+\|A^{\frac{1/\rho-\nu}{2}}S(t)\|^2_{\mathcal{B}(H)}\right)\,\dd t\\
 &\quad\le  C\int_0^{\Delta t + h^{2/\rho}} t^{-1+\rho \nu}\,\dd t\le C (\Delta^{\rho \nu}+h^{2\nu}).
\end{align*}
Furthermore, by \ref{prop:sm} (i), \eqref{dsmooth} and \eqref{serr}, it follows that
\begin{align*}
&\int_{\Delta t + h^{2/\rho}}^T \|A^{\frac{1/\rho-\nu}{2}}(\tilde{B}_{h,N}(t)+S(t))\|_{\mathcal{B}(H)}
\|A^{\frac{1/\rho-\nu}{2}}E_{h,N}(t)\|_{\mathcal{B}(H)}\,\dd t\\
&\le C\int_{\Delta t + h^{2/\rho}}^T t^{-1/2+\rho \nu /2} (\Delta^{\rho \nu}+h^{2\nu})t^{-1/2-\rho\nu/2} \,\dd t= C \ln\left (\frac{T}{\Delta t + h^{2/\rho}}\right)(\Delta^{\rho \nu}+h^{2\nu}).
\end{align*}
This finishes the estimate of the trace term considering that
$$
\sup_{(x,t)\in H\times [0,T]}\|u_{xx}(x,t)\|_{\mathcal{B}(H)}\le \sup_{x\in H}\|D^2\varphi(x)\|_{\mathcal{B}(H)}.
$$
To estimate the initial error term first assume
%$\varphi$ satisfy \eqref{eq:phi1} and that ${\bf E}\|X_0\|<\infty$. Then, using \eqref{err},
%\begin{align*}
%&\left|{\bf E}\left(u(\tilde{Y}(0),0)-u(Y(0),0)\right)\right|=\left|{\bf E}\left(u(\tilde{B}_{h,N}(T)X_0,0)-u(S(T)X_0,0)\right)\right|\\
%&\Big|{\bf E}\int_0^1\Big ( u_x\big(S(T)X_0+\theta(\tilde{B}_{h,N}(T)X_0-S(T)X_0 ),0\big),
%\tilde{B}_{h,N}(T)X_0-S(T)X_0 \big ) \,\dd \theta\Big|
%\\ &
%\leq \sup_{x\in H}\|u_x(x,0)\| \, {\bf E}\big(\|\tilde{E}_{h,N}(T) X_0\|\big)
%\leq C\sup_{x\in H}\|D\varphi(x)\|  T^{-\rho \nu}(\Delta t^{\rho \nu}+h^{2\nu}){\bf E}\|X_0\|.
%\end{align*}
 $\varphi$ satisfies \eqref{eq:phi0} and ${\bf E}\|X_0\|^2<\infty$. Note that under assumption \eqref{eq:phi0} it follows from Taylor's Formula
  that $$|\varphi(x)-\varphi(y)|\le \|D\varphi(y)\|\cdot\|x-y\|+C\|x-y\|^2,$$ where $C=\sup_{x\in H}\|D^2\varphi(x)\|_{\mathcal{B}(H)}$ and that $\|D\varphi(x)\|\le K(1+\|x\|)$ where $K=\max\{C,\|D\varphi(0)\|\}$. Therefore,
  $$
  |\varphi(x)-\varphi(y)|\le C (1+\|y\|)\cdot\|x-y\|+C\|x-y\|^2.
  $$
Then,  using the law of double expectations, and noting that $$X(T)=S(T)X_0+\int_0^TS(T-s)\,\dd W(s),$$
we have, using Corollary \ref{cor:u}, Proposition \ref{prop:eur} with $\nu=0$ and (\ref{err}), that
\begin{align*}
&\left|{\bf E}\left(u(\tilde{Y}(0),0)-u(Y(0),0)\right)\right|=\left|{\bf E}\left(u(\tilde{B}_{h,N}(T)X_0,0)-u(S(T)X_0,0)\right)\right|\\
%&=\left|{\bf E}\left(u(S(T)X_0,0)-u(\tilde{B}_{h,N}(T)X_0,0)\right)\right|\\
&= \big|{\bf E}(\mathbf{E}((\varphi(\tilde{B}_{h,N}(T)X_0+\int_0^TS(T-s)\,\dd W^Q(s))\\
&\qquad\qquad\qquad\qquad\qquad\qquad\qquad-\varphi(S(T)X_0+\int_0^TS(T-s)\,\dd W^Q(s)))|\mathcal{F}_0))\big|\\
&\le C{\bf E}(\|\tilde{B}_{h,N}(T)X_0 -S(T)X_0\|\cdot(1+\|X(T)\|))+C{\bf E}(\|\tilde{B}_{h,N}(T)X_0 -S(T)X_0\|^2)\\
&\le CT^{-\rho \nu}(\Delta t^{\rho \nu}+h^{2\nu})(1+{\bf E}(\|X_0\|^2+\|X(T)\|^2))\\
&\quad\quad\quad\quad\quad\quad\quad\quad\quad\quad\quad\quad\quad\quad\quad\quad\quad\quad\quad\quad\quad+CT^{-2\rho \nu}(\Delta t^{2\rho \nu}+h^{4\nu}){\bf E}\|X_0\|^2.
\end{align*}
Finally, if $\varphi$ satisfies \eqref{eq:phi1} and ${\bf E}\|X_0\|<\infty$, then, again by Taylor's Formula,  $|\varphi(x)-\varphi(y)|\le C\|x-y\|$ with $C=\sup_{x\in H}\|D\varphi(x)\|$. Thus, similarly to the above calculation, we have that
\begin{align*}
&\left|{\bf E}\left(u(\tilde{Y}(0),0)-u(Y(0),0)\right)\right|=\left|{\bf E}\left(u(\tilde{B}_{h,N}(T)X_0,0)-u(S(T)X_0,0)\right)\right|\\
%&=\left|{\bf E}\left(u(S(T)X_0,0)-u(\tilde{B}_{h,N}(T)X_0,0)\right)\right|\\
&= \big|{\bf E}(\mathbf{E}((\varphi(\tilde{B}_{h,N}(T)X_0+\int_0^TS(T-s)\,\dd W^Q(s))\\
&\qquad\qquad\qquad\qquad\qquad\qquad\qquad-\varphi(S(T)X_0+\int_0^TS(T-s)\,\dd W^Q(s)))|\mathcal{F}_0))\big|\\
&\le C{\bf E}(\|\tilde{B}_{h,N}(T)X_0 -S(T)X_0\|)\le CT^{-\rho \nu}(\Delta t^{\rho \nu}+h^{2\nu}){\bf E}\|X_0\|,
\end{align*}
and the proof is complete.
\end{proof}

\begin{remark} Below we give examples of the rate of convergence obtained in Theorem \ref{theo:weak} in some typical cases.\\
\noindent (i) If $Q=I$ (white noise), then, as mentioned in the introduction, we must have $d=1$ and the rate of weak convergence in time is $(1 - \rho/2)_-$ and in space it is $(2/\rho - 1)_-$.\\
\noindent (ii) If $Q$ is of trace class, then we may take $\nu=1/\rho$ and recover the finite dimensional order; that is, $1_-$ in time and $2_-$ in space.\\
\noindent (iii) Suppose that there exists some real numbers $\kappa$ and $\alpha>0$ such that $A^\kappa Q \in {\mathcal B}(H)$, $\mathrm{Tr}(A^{-\alpha}) < \infty$ and $\alpha - 1/\rho <\kappa \leq \alpha$. Then,
since
$$
\|A^{\frac{\nu-1/\rho}{2}}Q^\frac12\|^2_{\HS} \leq \|A^\kappa Q\|_{\mathcal B(H)} \mathrm{Tr}(A^{\nu-1/\rho - \kappa}),
$$
\noindent we recover a space weak order of convergence $(2/\rho - 2(\alpha -\kappa))_-$ and a time weak order of $(1-\rho(\alpha-\kappa)_-$. These are twice the strong orders (modulo the logarithmic term) respectively in space and in time found in \cite{kovacsprintems}.
\end{remark}
\noindent \textbf{Acknowledgement.} The authors wish to thank Dr. Fredrik Lindgren for valuable discussions.

\end{document}